\documentclass[12pt]{amsart}
\usepackage{mathrsfs}
\usepackage{geometry}
\usepackage[all]{xy}

\usepackage{amsmath, amsthm}
\usepackage{amssymb}
\usepackage{amscd}
\usepackage{latexsym}
\usepackage{epsfig}
\usepackage{graphics}
\usepackage{amsfonts}
\usepackage{psfrag}
\usepackage{graphicx}
\usepackage{fancyhdr}

\def\N {{\mathbb{N}}}

\def\R {{\mathbb{R}}}
\def\C {{\mathbb{C}}}

\def\F {{\mathscr{F}}}
\def\G {{\mathscr{G}}}
\def\H {{\mathscr{H}}}
\def\O {{\mathscr{O}}}
\def \T{{\mathbb{T}}}
\def\v{{\bf{v}}}
\def\u{{\bf{u}}}
\def\b{{\bf{b}}}
\def\0{{\bf{0}}}
\def\w{{\bf{w}}}
\def \al{{\alpha}}

\def \Ga{{\Gamma}}
\def\cdots {{\cdot\cdot\cdot}}
\def\ra {{\rightarrow}}

\hyphenation{mani-fold}
\newcommand{\ds}{\displaystyle}

\usepackage{mathabx,epsfig}
\def\acts{\mathrel{\reflectbox{$\righttoleftarrow$}}}

\newtheorem{theorem}{Theorem}[section]
\newtheorem{question}[theorem]{Question}
\newtheorem{lemma}[theorem]{Lemma}
\newtheorem{proposition}[theorem]{Proposition}
\newtheorem{corollary}[theorem]{Corollary}

\theoremstyle{definition}
\newtheorem{definition}[theorem]{Definition}
\newtheorem{example}[theorem]{Example}
\newtheorem{remark}[theorem]{Remark}
\newtheorem{remark on notation}[theorem]{Remark on Notation}
\newtheorem{notation}[theorem]{Notation}
\newtheorem{setup}[theorem]{Setup}

\numberwithin{equation}{section}


\begin{document}

\title[Graph Cohomology and Betti Numbers]{on graph cohomology and Betti numbers of Hamiltonian GKM manifolds}

\author{Shisen Luo}

\address{Department of Mathematics, Cornell University,
Ithaca, NY 14853-4201, USA}

\email{{\tt ssluo@math.cornell.edu}}

\subjclass[2010]{53D05, 97K30} \keywords{graph cohomology, Betti numbers}

\begin{abstract}
In this paper we introduce the concept of characteristic number that are proven to be useful in the study of the combinatorics of graph 
cohomology. We claim that it is a good combinatorial counterpart for geometric Betti numbers. We then use this concept and tools built along the way to study Hamiltonian GKM manifolds whose moment maps are in general 
position. We prove some connectivity properties of the their GKM graphs and show an upper bound of their second Betti numbers,
which allows us to conclude that these manifolds, in the case of dimension $8$ and $10$, have non-decreasing even Betti numbers up to half dimension. 
\end{abstract}

\date{\today}
\maketitle \tableofcontents
\section{\bf Introduction}\label{sec: introduction}
 In their landmark paper \cite{GKM}, Goresky, Kottwitz and MacPherson showed that in certain circumstances, the computation of equivariant cohomology, a topological problem, can be converted into a combinatorial one.  More concretely, assume a torus $\T^l, l\geq 2$, acts on a smooth manifold $M$. Under certain assumptions, which we will assume by saying the action is GKM, we can assign to $M$ a simple graph (undirected, no loops, no multiple edges) $\Ga=(V,E)$, which we will call the GKM graph of $M$,  and a map $\alpha: E\rightarrow \C[x_1, x_2, ..., x_l]_1$, where $\C[x_1, x_2, ..., x_l]_1$ denotes the set of non-zero linear polynomials in $x_1, x_2, ..., x_l$.  The GKM theorem establishes the following isomorphism
 \begin{equation}\label{eq:GKM}
 H_{\T^l}^{*}(M; \C)\cong \left\{(f_1, f_2, ..., f_{|V|})\in \bigoplus_{i=1}^{|V|}\C[x_1, x_2, ..., x_l]\bigg| \alpha(e_{ij})\big| f_i-f_j, \forall e_{ij}\in E\right\}.
 \end{equation}
The right hand side of the isomorphism will be denoted by $H^{*}_{\T^l}(\Ga,\alpha)$, or $H^{*}_{\T^l}(\Ga)$ when there is no chance of confusion. It is called the (equivariant) graph cohomology of $\Ga$.  The graph cohomology $H^{*}_{\T^l}(\Ga)$ is naturally graded. We assign degree $1$ to each variable $x_{i}$, then the isomorphism \eqref{eq:GKM} divides the degrees in half (the left hand side only has even degree elements).
  
  Among the many works inspired by \cite{GKM}, Gullemin and Zara \cite{GZ:graph} studied the combinatorial properties of $H_{\T^l}^{*}(\Ga)$, 
  and showed that many familiar theorems in geometry, such as the ABBV localization theorem, are in fact theorems about graphs. In other words, they proved some theorems in graph theory inspired by the well-known geometric facts. In this paper, we consider the problem in the opposite direction: Can we find new properties of GKM manifolds, which have no known geometric proof, by studying the properties of the graphs?

 We are interested in the case when $M$ is a symplectic manifold and $\T^l\acts M$ is a Hamiltonian action (we refer the reader to \cite{CdS:book} for definitions of basic notions in symplectic geometry). We will call $M$ a {\it Hamiltonian GKM manifold} if it is a symplectic manifold with a Hamiltonian torus action that is GKM. The manifold is always assumed to be compact. In this case, there exists a map 
  $$\phi: V\rightarrow \R^l,$$
 called a {\it moment map}, and the map $\alpha: E\rightarrow \C[x_1, ..., x_l]_1$ can be {\it induced} from $\phi$: if $\phi(v_i)=(y_1, ... ,y_l)$, 
  $\phi(v_j)=(z_1, ..., z_l)$, and $e_{ij}\in E$,  then $\alpha(e_{ij})=(z_1-y_1)x_1+(z_2-y_2)x_2+\cdots +(z_l-y_l)x_l$.   There will be assumptions on the map $\phi$ which ensures that $\alpha(e_{ij})$ is non-zero. This definition leaves room for some ambiguity since the edges are not oriented: $e_{ij}$ and $e_{ji}$
  are the same. Our definition of $\alpha(e_{ij})$ is only defined up to sign, but we shall see this ambiguity will not cause any problem.
  
  For any GKM action $\T^l \acts M$ with $l\geq 2$, we can always restrict it to a smaller torus action $\T^2\acts M$ that is still GKM.  So we assume the following setup for the remainder of the paper.
  \begin{setup}\label{setup1}
  Let $\Ga=(V, E)$ be a simple graph, and $\phi: V\ra \R^2$ a map in {\it general position} (we will make this precise in Definition~\ref{def:general}).  For simplicity, we will always use $m$ to stand for $|V|$. Let $\alpha: E\rightarrow \C[x,y]_1$ be induced by $\phi$, and
  \[H_{\T}^{*}(\Ga)=\left\{(f_1, f_2, ..., f_{m})\in \bigoplus_{i=1}^{m}\C[x,y]\bigg| \alpha(e_{ij})\big| f_i-f_j, \forall e_{ij}\in E\right\}.\]

  \end{setup}
  \begin{definition}\label{def:general}
  We say $\phi: V\ra \R^2$ is in {\it general position} if no three points in $\phi(V)$ lie on the same line. In particular, this implies $\phi$ is injective.
  \end{definition}
  
  With this setup, we show in Section~\ref{sec: freeness} that $H_{\T}^{*}(\Ga)$ is always a free module over $\C[x,y]$ of dimension $m$. For any set of homogeneous generators of $H_{\T}^{*}(\Ga)$ as module over $\C[x,y]$,
  \[\gamma_{1}, \gamma_{2}, ..., \gamma_{m},\]
  we set
  $c_{i}(\Gamma)= \big|\{j\in \N\big|\ \mbox{degree}(\gamma_{j})=i\}\big|$, the number of degree $i$ generators.
  
  Although the set of generators is not unique, it is a fact for graded free modules that the number $c_{i}$ is independent of the choice of the set of generators, hence well-defined. 
  
  \begin{definition}\label{def:characteristic}
  We call $c_{i}(\Ga)$ the $i$-th characteristic number of $\Ga$. 
  \end{definition}
  
  These $c_{i}$'s will serve as the combinatorial counterpart for the geometric Betti numbers of the manifolds in this paper. This is different from the combinatorial Betti numbers defined in \cite{GZ:graph} as follows, adapted to our setup.
    
  Pick $\xi \in \R^2$ a {\it generic direction}, which assures that $\phi(v_i)\cdot\xi\neq \phi(v_j)\cdot\xi$ for $i\neq j$. For any $v_{i}\in V$, define $$\sigma(v_{i})=\bigg|\{v_{j}\in V\big|e_{ij}\in E, \phi(v_{j})\cdot \xi < \phi(v_{i})\cdot \xi \}\bigg|,$$ called the {\it index} of $v_{i}$.
  Define $\beta_{k}(\Ga)$ to be the number of vertices of index $k$, the {\it $k$-th (combinatorial) Betti number of $\Ga$}. Guillemin and Zara showed in \cite{GZ:graph} that with the existence of axial function, although the indices depend on the choice of $\xi$, the combinatorial Betti numbers do not. 
  
  We note that when $\Ga$ is actually the GKM graph of a manifold, then these three notions: characteristic numbers, combinatorial Betti numbers and geometric Betti numbers all agree. 
  
  These $\beta_{i}$'s were used as the combinatorial counterpart of geometric Betti numbers in \cite{GZ:graph}. They do have some nice properties that resemble the geometric ones. For example, Poincare duality holds. 
  But there are certain shortfalls. First of all, they are well-defined only for very restrictive graphs: regular graphs with axial functions. This makes inductive arguments difficult. Second of all, for a connected regular graph equipped with an axial function, one would expect the zeroth Betti number to be $1$, but this is not the case, as the following simple example shows.
  \begin{figure}[!ht]
   \includegraphics[height=60mm]{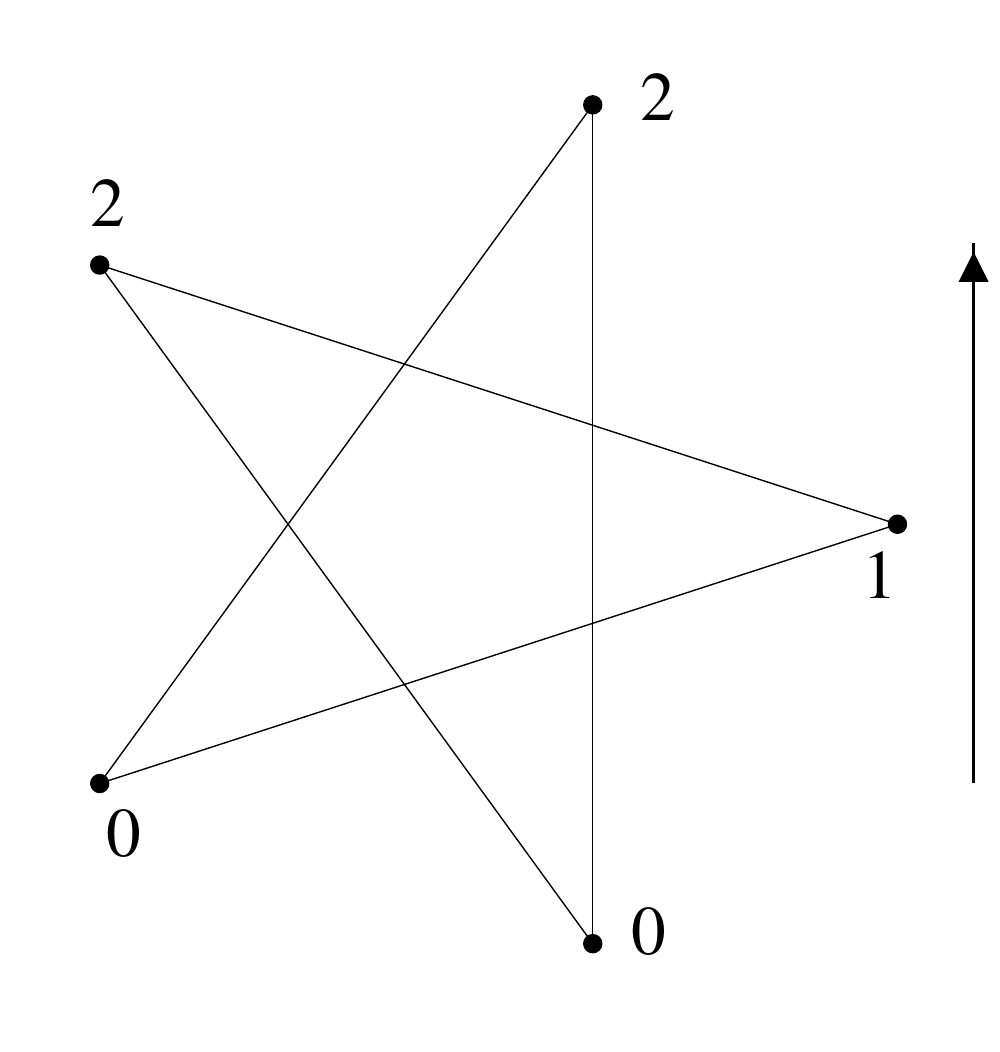}
    \caption{A regular graph with axial function but $\beta_{0}=2$.}
\label{fig:ZerothBetti2}
  \end{figure}
  \begin{example}
  As shown in Figure~\ref{fig:ZerothBetti2}, the regular two-valent graph has an axial function induced by the given embedding. The arrow points in the $\xi$ direction and the number beside a vertex indicates the index of the that vertex. We can see although Poincare duality still holds, the graph has an undesirable $\beta_{0}$, which is $2$.  
  \end{example}
   
   In contrast, $c_{i}$'s are defined in a much more general setting and $c_{0}$ equals to the number of connected components of the graph. These properties provide the first motivation for studying these numbers.
   In Section~\ref{sec: characteristic}, we study the basic properties of characteristic numbers. We will see how they resemble, or fail to resemble, the geometric Betti numbers in various senses.
   In the remaining two sections, we study the properties of actual GKM graphs, i.e., graphs that are in fact the GKM graphs of compact Hamiltonian GKM manifolds. 
   In Section~\ref{sec: connectivity}, using our tools of characteristic numbers and the fact that top Betti number of a compact oriented manifold is $1$, we deduce some connectivity properties of (actual) GKM graphs, under the assumption that the moment map is in general position. The main theorems are Theorem~\ref{theorem:edge connected} and Theorem~\ref{theorem:vertex connected}.
   
   Section~\ref{sec: bound} is largely motivated by the following question:
   \begin{question}
   Suppose that a symplectic manifold $(M, \omega)$ admits a Hamiltonian $S^1$ action with isolated fixed points. Does $(M, \omega)$ satisfy the hard Lefschetz property?
   \end{question}
   Yael Karshon brought up the question at a workshop at BIRS in 2005 and is listed as Problem 4.2 in \cite{Banff}. The problem was said to have existed for over ten years before then, but the origin of it is unclear.  Susan Tolman pointed out an easier version of the question. 
   \begin{question}[Problem 4.3 in \cite{Banff}]\label{question:tolman}
   Suppose that a symplectic manifold $(M, \omega)$ of dimension $2d$ admits a Hamiltonian $S^1$ action with isolated fixed points. Are the Betti numbers of $M$ unimodal? That is, do they satisfy 
   \[\beta_1\leq \beta_3\leq \cdots \leq \beta_{2\lceil\frac{d}{2}\rceil-1}\]
   and 
   \[\beta_0\leq \beta_2\leq \cdots \leq \beta_{2\lfloor\frac{d}{2}\rfloor},\]
  where $2\lceil\frac{d}{2}\rceil-1$ and $2\lfloor\frac{d}{2}\rfloor$ are  respectively the largest odd integer and even integer no greater than $d$?
   \end{question}
    
    A very special case of manifolds which have Hamiltonian $S^1$ actions with isolated fixed points are Hamiltonian GKM manifolds. As we have already stated, these manifolds have the virtue that their equivarant cohomology, and hence ordinary cohomology, can be computed combinatorially as graph cohomology. Moreover, all the odd Betti numbers of these manifolds vanish, so we only need to worry about the even Betti numbers when considering Tolman's question. 
    
    Using the tools and results we will develop in Section~\ref{sec: characteristic} and ~\ref{sec: connectivity}, we prove in Section~\ref{sec: bound} an upper bound for $\beta_2$ of Hamiltonian GKM manifold whose moment map is in general position. This is the content of Theorem~\ref{thm:bound}.  In dimension $8$ and $10$, this will imply that the Betti numbers of these manifolds are unimodal. So our work provides a class of examples which suggests a framework for a positive answer to Karshon's and Tolman's questions.

   {\bf Acknowlegement:} I would like to thank Tara Holm, Bob Connelly, Victor Guillemin, Allen Knutson, Tomoo Matsumura, Edward Swartz and Catalin Zara for many helpful discussions.

\section{\bf Freeness of graph cohomology in dimension two}\label{sec: freeness}
The main theorem in this section, Theorem~\ref{theorem:freeness}, holds in a much more general setting than we assumed for the rest of the paper. In particular, it does not require the existence of $\phi$ and there is no constraints on the map $\alpha$ at all.
\begin{theorem}\label{theorem:freeness}
Given $\Ga=(V,E)$ a simple graph and $\al: E\ra \C[x,y]_1$, then $H_{\T}^{*}(\Ga)\cong (\C[x,y])^m$ as modules over $\C[x,y]$, where $m=|V|$. 
\end{theorem}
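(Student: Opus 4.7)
The plan is to deduce freeness from the Auslander--Buchsbaum formula, exploiting the fact that $R := \C[x,y]$ is a graded regular ring of Krull dimension two. Write $M = H^*_\T(\Ga)$. By construction $M$ fits into a short exact sequence of graded $R$-modules
\[
0 \longrightarrow M \longrightarrow R^m \xrightarrow{\ \pi\ } Q \longrightarrow 0,
\]
where $\pi(f_1,\ldots,f_m) = \bigl(f_i - f_j \bmod \al(e_{ij})\bigr)_{e_{ij}\in E}$ and $Q := \operatorname{Im}(\pi)$ is a graded submodule of $\bigoplus_{e \in E} R/(\al(e))$.

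The first step is to lower-bound $\operatorname{depth}(Q)$ with respect to the irrelevant maximal ideal $\mathfrak{m} = (x,y)$. Since $\al(e)$ is a nonzero linear form, $R/(\al(e))\cong \C[t]$ is a one-dimensional Cohen--Macaulay $R$-module whose only associated prime is the height one ideal $(\al(e))$. Hence every associated prime of $\bigoplus_{e} R/(\al(e))$ has height one, and passing to the submodule $Q$ preserves this: no associated prime of $Q$ equals $\mathfrak{m}$, so there exists a nonzerodivisor on $Q$ in $\mathfrak{m}$ and $\operatorname{depth}_\mathfrak{m}(Q) \geq 1$.

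Next I would feed this into the depth lemma for the short exact sequence:
\[
\operatorname{depth}(M) \;\geq\; \min\!\bigl(\operatorname{depth}(R^m),\ \operatorname{depth}(Q)+1\bigr) \;\geq\; \min(2,2) \;=\; 2.
\]
The Auslander--Buchsbaum formula applied to the graded local ring $(R,\mathfrak{m})$ then yields $\operatorname{pd}_R(M) = \operatorname{depth}(R) - \operatorname{depth}(M) \leq 0$, so $M$ is projective; being a finitely generated graded module over the polynomial ring $R$, it is therefore free. To pin down the rank I would localize at the fraction field $\C(x,y)$: every $\al(e)$ becomes a unit, the divisibility conditions become vacuous, and $M \otimes_R \C(x,y) \cong \C(x,y)^m$, giving rank exactly $m$.

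The main obstacle is the depth estimate for $Q$ in the first step, i.e.\ securing a single nonzerodivisor on $Q$ inside $\mathfrak{m}$. This is where the assumption $\dim R = 2$ is essential: each summand $R/(\al(e))$ cuts codimension one, so the ambient $\bigoplus_e R/(\al(e))$ automatically has codimension one support, and one unit of depth is all that is required to push $M$ from merely torsion-free to fully free. Everything else in the argument is formal bookkeeping with Auslander--Buchsbaum and generic-point localization.
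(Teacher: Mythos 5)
Your argument is correct, and it reaches freeness by a genuinely different route than the paper. You realize $M=H^{*}_{\T}(\Ga)$ as the kernel of $\pi\colon R^m\to\bigoplus_{e}R/(\al(e))$ with $R=\C[x,y]$, observe that every associated prime of the image $Q$ has height one (so $\operatorname{depth}_{\mathfrak{m}}Q\geq 1$; when $E=\emptyset$ and $Q=0$ this is vacuous but $M=R^m$ outright), and then combine the depth lemma for the short exact sequence with Auslander--Buchsbaum to force $\operatorname{pd}_R M=0$. The paper instead argues sheaf-theoretically: it shows the sheaf $\F$ on $\operatorname{Proj}\C[x,y]=\C P^1$ associated to $M$ is locally free (torsion-free over a PID on each affine chart), invokes Grothendieck's splitting theorem to write it as a sum of line bundles, pulls back to $\C^2\setminus\{0\}$ where the Picard group is trivial, and finally proves that the restriction $\G(\C^2)\to\G(\C^2\setminus\{0\})$ is an isomorphism. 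That last lemma is the paper's counterpart of your depth estimate on $Q$: both express that $R^m/M$ is supported in codimension one, and both are exactly where $\dim R=2$ enters --- consistent with the paper's three-variable counterexample, neither argument survives in more variables. The two proofs compute the rank the same way, by localizing away from the $\al(e)$'s. Your version is shorter and stays inside standard commutative algebra; the paper's buys the intermediate geometric fact that the associated bundle on $\C P^1$ splits, at the cost of quoting Grothendieck's theorem and $\operatorname{Pic}(\C^2\setminus\{0\})=0$. Two small points to make explicit in a final write-up: the map $\pi$ requires a choice of orientation of each edge (harmless, since the kernel is unchanged), and ``projective implies free'' should be justified either via the graded structure or by Quillen--Suslin.
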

\begin{proof}
Let $M=H_{\T}^{*}(\Gamma)$. As a graded module over
$\C[x,y]$, it defines a quasicoherent sheaf over $\mbox{Proj}
\C[x,y]=\C P^1$. We denote this sheaf by $\mathscr{F}$. As a module
over $\C[x,y]$, $M$ also defines a quasicoherent sheaf over
$\mbox{Spec}\C[x,y]=\C^2$. We denote this sheaf by $\mathscr{G}$.
The restriction of $\G$ to $\C^2\backslash\{0\}$ will be denoted by
$\H$, which is a quasicoherent sheaf over $\C^2\backslash\{0\}$.

\begin{lemma}\label{lemma: locally free over P1}
The sheaf $\F$ is locally free.
\end{lemma}
\begin{proof}[Proof of Lemma~\ref{lemma: locally free over P1}]
$\C P^1$ is covered by $D(x)=\mbox{Spec}\C[\frac{y}{x}]$ and
$D(y)=\mbox{Spec}\C[\frac{x}{y}]$. Now that $\F(D(x))=(M_{x})_{0}$,
the degree $0$ part of the localized module $M_x$, it follows from
the definition of $M$ that $\F(D(x))$ is a torsion free
$\C[\frac{y}{x}]$-module, hence a free $\C[\frac{y}{x}]$-module.
Similarly $\F(D(y))$ is a free $\C[\frac{x}{y}]$-module.
\end{proof}

\begin{lemma}\label{lemma:pullback of sheaf}
Let $\pi: \C^2 \backslash\{0\}\rightarrow \C P^1$ be the natural
projection, then $\pi^{*}\F=\H$.
\end{lemma}
\begin{proof}[Proof of Lemma~\ref{lemma:pullback of sheaf}]This can be proved by looking at the distinguished open
subsets of both spaces. The space $\C^2\backslash\{0\}$ is covered by $\mbox{Spec}\C[x,\frac{1}{x},y]$ and $\mbox{Spec}\C[x,y,\frac{1}{y}]$, while $\C P^1$ is covered by $\mbox{Spec}\C[\frac{y}{x}]$ and $\mbox{Spec}\C[\frac{x}{y}]$.

We have the following natural isomorphism
$$(M_{x})_{0}\otimes_{\C[\frac{y}{x}]}\C[x,\frac{1}{x},y]=(M_{x})_{0}
\otimes_{\C[\frac{y}{x}]}(\C[\frac{y}{x}]\otimes_{\C}\C[x,\frac{1}{x}])
=(M_{x})_{0}\otimes_{\C}\C[x,\frac{1}{x}]=M_{x}.$$ Similarly
$$(M_{y})_{0}\otimes_{\C[\frac{x}{y}]}\C[x,y,\frac{1}{y}]=M_{y}.$$
These say exactly $\pi^{*}\F=\H$.
\end{proof}

As an immediate consequence, $\H$ is also locally free. By a theorem
of Grothendieck \cite{Grothendieck}, $\F$ splits as a direct sum of
line bundles. So $\H$ is also a direct sum of line bundles, but the
Picard group of $\C^2\backslash\{0\}$ is trivial, so $\H$ must be a
trivial vector bundle. In particular, $\H(\C^2\backslash\{0\})$ must be a
free module over
$\O_{\C^2\backslash\{0\}}(\C^2\backslash\{0\})=\C[x,y]$.

Now $\G(\C^2\backslash\{0\})=\H(\C^2\backslash\{0\})$ and
$\G(\C^2)=M$, the following lemma will enable us to conclude that
$M$ is a free $\C[x,y]$-module.

\begin{lemma}\label{lemma:iso}
The restriction map $r: \G(\C^2)\rightarrow \G(\C^2\backslash\{0\})$
is an isomorphism.
\end{lemma}
\begin{proof}[Proof of Lemma~\ref{lemma:iso}]
We can think of $\G(\C^2\backslash\{0\})$ as $\bigcap_{ab\neq
0}M_{ax+by}$. The intersection makes sense since $M$ is torsion-free
by definition and thus $M_{ax+by}$ can be thought of as a subset of
$M_{(0)}=\C(x,y)^m$, the localization of $M$ at the zero prime ideal.

Assume $(f_1,...,f_m)\in \G(\C^2\backslash\{0\})=\bigcap_{ab\neq
0}M_{ax+by}\subseteq \C(x,y)^m$.
Then we immediately see that $f_i\in \C[x,y]$ for all $i$. Since the
graph is finite, we can pick a non-zero linear polynomial $px+qy$, such that it
is not a multiple of any $\alpha(e_{ij})$. Then $(f_1,...,f_m)\in
M_{px+qy}$ says that there exists $n$, such that 
\[\alpha(e_{ij})|(px+qy)^n(f_{i}-f_{j})\]
for all $e_{ij}\in E$. So
\[\alpha(e_{ij})|(f_{i}-f_{j}),\]
which says exactly $(f_1,...,f_m)\in M$.
\end{proof}

Now we know $M$ is a free $\C[x,y]$-module. To see its dimension,
let $\ds{f=\prod_{e_{ij}\in E}\alpha(e_{ij})}$. The dimension of $M$ as module over $\C[x,y]$
equals to the dimension of $M_{f}$ as a module over $\C[x,y]_{f}$.
But $M_f$ is generated by $(1,0,...,0),(0,1,...,0),...,(0,0,...,1)$
as a $\C[x,y]_f$-module. So $\mbox{dim} M=m$. This complete the proof of the theorem.
\end{proof}

The similar result does not hold in higher dimensions in general, even if we assume $\Ga$ is a regular graph and $\alpha$ is induced from a map $\phi: V\ra \R^l$. An easy counterexample is the following.
\begin{example}
Consider $\Ga=(V, E)$ given by $V=\{v_1, v_2, v_3, v_4\}$ and $E=\{e_{12}, e_{23}, e_{34}, e_{14}\}$. Note that $\Ga$ is a regular graph of degree $2$.  Define $\phi : V\ra \R^3$ by 
\[\phi(v_1)=(0,0,0), \phi(v_2)=(1,0,0), \phi(v_3)=(1,1,0), \phi(v_4)=(1,1,1).\]
Then $\phi$ induces $\alpha: E\ra \C[x,y,z]_1$ given by
\[\alpha(e_{12})=x, \alpha(e_{23})=y, \alpha(e_{34})=z, \alpha(e_{14})=x+y+z.\]
Then $H_{\T^3}^{*}(\Gamma,\alpha)$ as a module over $\C[x,y,z]$ is generated by
\[(1,1,1,1), (0,x,x+y,x+y+z), (0,xy,0,0),(0,0,yz,0), \mbox{\ and\ } (0,xz,xz,0).\]
This is not free as a $\C[x,y,z]$-module, since the generators are related by 
\[y(0,xz,xz,0)=z(0,xy,0,0)+x(0,0,yz,0).\]
\end{example}


This naturally leads to the following question.

\begin{question}
Given a regular graph $\Ga=(V,E)$ together with an {\it axial function} $\al: E\ra \C[x_1, ..., x_{l}]_1$ in the sense of
definition 1 in 2.1 in \cite{GZ:graph}, is the graph cohomology
necessarily a free module over $\C[x_1, ..., x_{l}]$? If not, what are the combinatorial criteria
on $(\Gamma,\alpha)$ that guarantee $H_{\T^l}^{*}(\Gamma,\alpha)$ to be free?
\end{question}

\section{\bf Properties of characteristic number $c_i$} \label{sec: characteristic}
In this section we study some basic properties of the characteristic numbers, whose definition was given in Definition~\ref{def:characteristic}. These numbers do not satisfy Poincare  duality in general, but we will prove a weaker version of it. We will discuss how does the ``top characteristic number" behaves for regular graphs. Then we discuss their relations with the combinatorial Betti numbers.  After that, we will compute the characteristic numbers for complete graphs, which are (graph theoretically) GKM graphs for complex projective spaces. Finally, we prove an analogue of K\"unneth formula. 

\begin{notation}
Continuing from Setup~\ref{setup1}, the vertices of $\Ga$ are labeled as $v_1, v_2, ..., v_m$. The edge connecting $v_{i}$ and $v_{j}$ will be denoted by $e_{ij}$. We do not distinguish between $e_{ij}$ and $e_{ji}$, but most of the time we will use the smaller number as the first index. We will use $\lambda(v_i)$ to denote the degree of $v_i$, i.e., the number of edges containing $v_{i}$. 

The characteristic number $c_{i}$ is not affected by a linear automorphism of $\R^2$ composed with $\phi: V\ra \R^2$, so without loss of generality, we may assume $\phi(v_{i})$ and $\phi(v_{j})$ have different second component for any $i \neq j$. Also we notice the definition of graph cohomology remains unchanged if we scale $\alpha$, so if we assume $\phi(v_{i})=(p_{i}, q_{i})$, $\phi(v_{j})=(p_{j}, q_{j})$ and let $a_{ij}=-\frac{p_{j}-p_{i}}{q_{j}-q_{i}}$, we may redefine $\alpha$ as
\[\alpha(e_{ij})=y-a_{ij}x.\]

For any positive integers $p\leq q$, we will use $\b_{p}^{q}$ to denote the $p$-th standard basis vector in $\C^{q}$, i.e., the $p$-th entry of $\b_{p}^{q}$ is $1$  and it is the only non-zero entry.

For $1\leq i<j \leq m$, we let 
\[\begin{array}{cl}
\v_{ij}^{0} & =\b_{i}^m-\b_{j}^m=(0,\cdots, 1, 0,\cdots, -1, 0,\cdots, 0)\in \C^m;\\[0.5ex]
\v_{ij}^1&=\b_{i}^{2m}-\b_{j}^{2m}+a_{ij}\b_{i+m}^{2m}-a_{ij}\b_{j+m}^{2m}\\[0.5ex]
&=(0,\cdots, 1,0\cdots, -1,0,\dots,0,0,\cdots, a_{ij},0,\cdots, -a_{ij},0, \cdots, 0)\\[0.5ex]
&=(\v_{ij}^0,\ a_{ij}\v_{ij}^0)\in \C^{2m};\\[0.5ex]
\v_{ij}^2&=\b_{i}^{3m}-\b_{j}^{3m}+a_{ij}\b_{i+m}^{3m}-a_{ij}\b_{j+m}^{3m}+a_{ij}^2\b_{i+2m}^{3m}-a_{ij}^2\b_{j+2m}^{3m}\\[0.5ex]
&=(\v_{ij}^1,\ a_{ij}^2\v_{ij}^{0})\in \C^{3m};
\end{array}\]
and so forth, where we use $(\bf{u}, \bf{v})$ to denote concatenation of two vectors $\bf{u}$ and $\bf{v}$.
In general, 
\[
\begin{array}{cl}
\v_{ij}^k&=\b_{i}^{(k+1)m}-\b_{j}^{(k+1)m}+a_{ij}\b_{i+m}^{(k+1)m}-a_{ij}\b_{j+m}^{(k+1)m}+\cdots+a_{ij}^k\b_{i+km}^{(k+1)m}-a_{ij}^k\b_{j+km}^{(k+1)m}\\[0.5ex]
&=(\v_{ij}^{k-1},\ a_{ij}^{k}\v_{ij}^{0})\in \C^{(k+1)m}.
\end{array}\]

Denote by $M_{k}(\Ga)$ the matrix of size $|E|\times (k+1)m$, whose rows are indexed by $E$ and the row corresponding to $e_{ij}$ is $\v_{ij}^k$. 

Let $r_{k}(\Ga)=\mbox{rank} M_{k}(\Ga)$. This is the dimension of the vector space spanned by the vectors $\{\v_{ij}^k\big| e_{ij}\in E\}$. We let $r_{-1}(\Ga)=0$.

Let $s_{k}(\Ga)=|E|-r_{k}(\Ga)$. This is the dimension of the vector space of linear relations among the vectors 
$\{\v_{ij}^k\big| e_{ij}\in E\}$. And we let $s_{-1}(\Ga)=|E|$.

Throughout the rest of the paper, we will use the following notations as we have introduced in Setup~\ref{setup1} and just above:
\[\Ga,\ V,\  E,\  m;\]
\[v_{1}, v_2, ..., v_{m},\ \lambda(v_i),\  e_{ij},\mbox{\ for\ } 1\leq i, j\leq m; \]
\[\phi,\  \alpha,\  a_{ij},\  \v_{ij}^k,\  \mbox{\ for\ } 1\leq i, j\leq m \mbox{\ and\ } k\geq 0;\]
\[M_{k}(\cdot),\ c_{k}(\cdot)\mbox{\ for\ } k\geq 0;\ \ r_{k}(\cdot), s_{k}(\cdot),\mbox{\ for\ } k\geq -1,\]
 where $\cdot$ in the last row can denote any graph in the sense of Setup~\ref{setup1}. 
\end{notation}

\begin{remark}
The matrix $M_{1}(\Ga)$ is closely related to the notion of {\it rigidity matrix} in rigidity theory.  More information about rigidity matrix and rigidity theory can be found in \cite{GSS}. The $``k=1"$ version of some statements in this paper have been proved in \cite{LUO:rigidity}, in slightly different language.
\end{remark}

\begin{figure}[!ht]
\includegraphics[height=60mm]{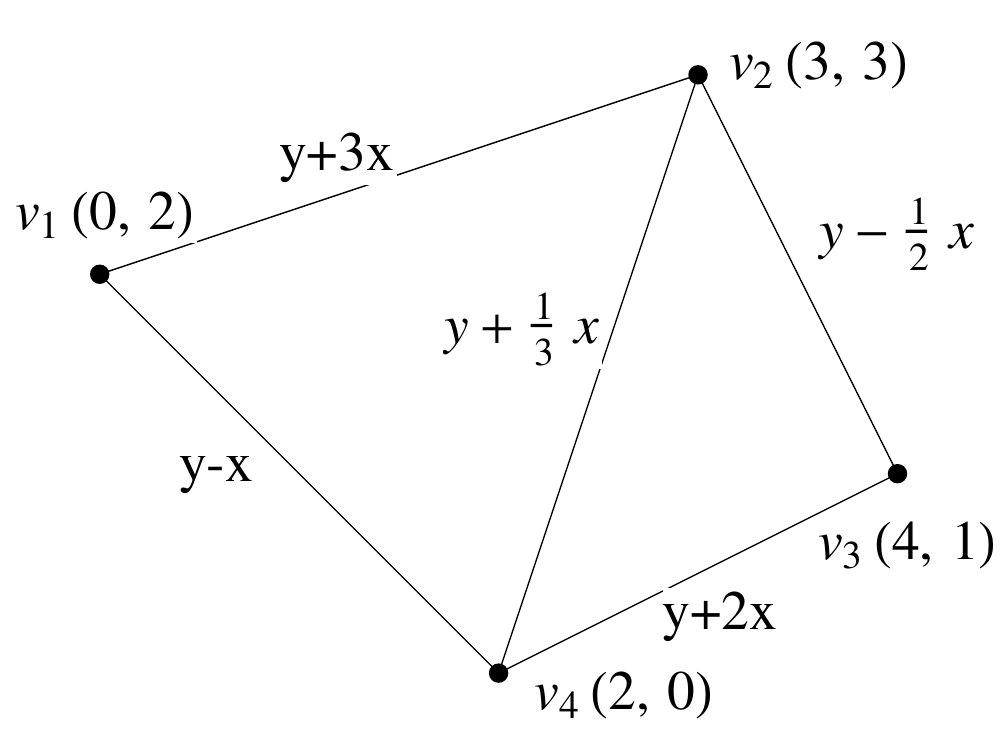}
\caption{An example of $\Ga, \phi$ and $\alpha$}
\label{fig:firstexample}
\end{figure}

\begin{example}\label{ex:first}
Figure~\ref{fig:firstexample} shows an easy example of a graph. On the graph we marked the coordinates of image of $\phi$ and also the image of $\alpha$.  
The edge set is $E=\{e_{12}, e_{14}, e_{23}, e_{24}, e_{34}\}$. So $M_{0}(\Ga)$ is a $5\times 4$ matrix with rows indexed by $E$ and column indexed by $V$. We can write it out explicitly
\[
M_0(\Ga)=\ \ \ \bordermatrix{\text{}&v_1&v_2&v_3 &v_4\cr
                e_{12}&1 &  -1  & 0 & 0\cr
                e_{14}& 1  &  0 & 0 & -1\cr
                e_{23}& 0 & 1 & -1 & 0\cr
                e_{24}& 0  &   1 &0 & -1\cr
                e_{34}& 0 &  0& 1& -1}.
\]
We can compute $M_{1}(\Ga), M_{2}(\Ga),$ and so forth by adding columns. As labeled in Figure~\ref{fig:firstexample}, we can see $a_{12}=-3, a_{14}=1, a_{23}=\dfrac{1}{2}, a_{24}=-\dfrac{1}{3}$ and $a_{34}=-2$, so $M_{2}(\Ga)$ is
\[
M_2(\Ga)=\bordermatrix{\text{}&v_1^0&v_2^0&v_3^0 &v_4^0\ \ &v_1^1&v_2^1&v_3^1 &v_4^1\ \ &v_1^2&v_2^2&v_3^2 &v_4^2\cr
                e_{12}&1 &  -1  & 0 & 0\ \ &a_{12}&-a_{12}&0&0\ \ &a_{12}^2&-a_{12}^2&0&0\cr
                e_{14}& 1  &  0 & 0 & -1\ \ &a_{14}&0&0&-a_{14}\ \ &a_{14}^2&0&0&-a_{14}^2\cr
                e_{23}& 0 & 1 & -1 & 0\ \ &0&a_{23}&-a_{23}&0\ \ &0&a_{23}^2&-a_{23}^2&0\cr
                e_{24}& 0  &   1 &0 & -1\ \ &0&a_{24}&0&-a_{24}\ \ &0&a_{24}^2&0&-a_{24}^2\cr
                e_{34}& 0 &  0& 1& -1\ \ &0&0&a_{34}&-a_{34}\ \ &0&0&a_{34}^2&-a_{34}^2}.
\]
One can show that $r_{0}(\Ga)=3$ and $r_{k}(\Ga)=5$ for all $k\geq 1$. It follows that $s_{0}(\Ga)=2$ and $s_{k}(\Ga)=0$ for all $k\geq 1$.
\end{example}

\begin{proposition}\label{prop:c_i and r_i}
The characteristic numbers can be computed as follows:

\begin{align}
c_{0}(\Ga)&=\pi_{0}(\Ga), \ \ \mbox{the\ number\ of\ connected\ components\ of\ }\Ga, \label{eq:pi_0}\\[0.2ex]
&=m-r_{0}(\Ga)=m-|E|+s_{0}(\Ga);\label{eq:c_0}\\[1.5ex]
c_{k}(\Ga)&=2r_{k-1}(\Ga)-r_{k}(\Ga)-r_{k-2}(\Ga) \label{eq:c_k} \\[0.2ex]
&=s_{k}(\Ga)+s_{k-2}(\Ga)-2s_{k-1}(\Ga), \forall k\geq 1.
\end{align}

\end{proposition}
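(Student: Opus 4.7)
The plan is to compute the Hilbert function $k \mapsto \dim_\C H_\T^*(\Ga)_k$ in two ways and compare.

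First, by Theorem~\ref{theorem:freeness}, $H_\T^*(\Ga)$ is a free graded $\C[x,y]$-module. Choosing a homogeneous basis with $c_i(\Ga)$ generators in degree $i$, and using $\dim_\C \C[x,y]_d = d+1$, the graded piece in degree $k$ has dimension
\[\dim_\C H_\T^*(\Ga)_k = \sum_{i=0}^{k} c_i(\Ga)(k-i+1).\]

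Second, I compute the same dimension directly. A homogeneous degree-$k$ element is a tuple $(f_1, \ldots, f_m)$ with each $f_i \in \C[x,y]_k$ satisfying $\alpha(e_{ij}) \mid f_i - f_j$. Writing $f_i = \sum_{l=0}^k c_{i,l}\, x^{k-l} y^l$ and recalling $\alpha(e_{ij}) = y - a_{ij} x$, the divisibility on a homogeneous polynomial of degree $k$ is equivalent to $(f_i - f_j)(x, a_{ij}x) = 0$, i.e., to the single linear equation $\sum_{l=0}^{k}(c_{i,l} - c_{j,l})\, a_{ij}^l = 0$. Collected over all edges, these equations are precisely the rows of $M_k(\Ga)$ applied to the coefficient vector $(c_{i,l})$, so the degree-$k$ piece has dimension $m(k+1) - r_k(\Ga)$.

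Equating the two expressions and writing $D(k) = m(k+1) - r_k(\Ga)$ (consistent with $r_{-1}(\Ga) = 0$ once I set $D(-1) = 0$), a short telescoping gives $D(k) - D(k-1) = \sum_{i=0}^{k} c_i(\Ga)$, and hence $c_k(\Ga) = D(k) - 2D(k-1) + D(k-2)$ for $k \geq 1$. The $m$-terms cancel under the second difference, producing exactly $c_k(\Ga) = 2 r_{k-1}(\Ga) - r_k(\Ga) - r_{k-2}(\Ga)$. Substituting $s_j(\Ga) = |E| - r_j(\Ga)$ (using the convention $s_{-1}(\Ga) = |E|$) converts this into the stated $s$-formula. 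For $k = 0$, the first identity directly gives $c_0(\Ga) = D(0) = m - r_0(\Ga) = m - |E| + s_0(\Ga)$.

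Finally, to identify $c_0(\Ga)$ with $\pi_0(\Ga)$: in degree $0$ each $f_i$ is a scalar and the divisibility constraints collapse to $f_i = f_j$ for every edge, so $H_\T^*(\Ga)_0$ is the space of functions constant on each connected component of $\Ga$, whose dimension is $\pi_0(\Ga)$; since only the degree-$0$ generators contribute to this piece, $c_0(\Ga) = \dim_\C H_\T^*(\Ga)_0 = \pi_0(\Ga)$. The only substantive step is identifying the degree-$k$ constraint matrix with $M_k(\Ga)$; everything else is finite-difference bookkeeping.
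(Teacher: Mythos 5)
Your proposal is correct and follows essentially the same route as the paper: both identify $\dim_\C H_\T^{k}(\Ga)=(k+1)m-r_k(\Ga)$ by converting the divisibility condition $(y-a_{ij}x)\mid f_i-f_j$ into the linear equations given by the rows of $M_k(\Ga)$, and both compare this with the Hilbert function $\sum_{i=0}^{k}(k+1-i)c_i(\Ga)$ of the free module. The only difference is cosmetic: you extract $c_k$ by a clean second-difference/telescoping identity, whereas the paper unwinds the same recursion by induction on $k$.
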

\begin{proof}
Let $f=(f_{1},f_{2}, ..., f_{m})\in H_{\T}^{0}(\Ga)$, then it follows from the definition of graph cohomology that this is the case if and only if $f_{i}=f_{j}\in \C$ whenever $v_{i}$ and $v_{j}$ are in the same connected component of $\Ga$. So $c_{0}$ equals to the number of connected components of $\Ga$, which we denote by $\pi_{0}(\Ga)$.  This proves \eqref{eq:pi_0}.

For any $k\geq 0$, assume 
\[f=(\sum_{n=0}^{k}z_{1n}x^{k-n}y^n, \sum_{n=0}^{k}z_{2n}x^{k-n}y^n, ..., \sum_{n=0}^{k}z_{mn}x^{k-n}y^n)\in H_{\T}^{k}(\Ga),\] where $z_{st}\in \C$ for $1\leq s\leq m, 0\leq t\leq k$. Then by the definition of graph cohomology, this is equivalent to 
\[(y-a_{ij}x)\bigg| (\sum_{n=0}^{k}z_{in}x^{k-n}y^n)-(\sum_{n=0}^{k}z_{jn}x^{k-n}y^n), \forall e_{ij}\in E,\]
which in turn means when we substitute $y$ with $a_{ij}x$ in $\ds{ (\sum_{n=0}^{k}z_{in}x^{k-n}y^n)-(\sum_{n=0}^{k}z_{jn}x^{k-n}y^n) }$, we should get zero polynomial:
\[\begin{array}{cl}
0&=\ds{(\sum_{n=0}^{k}a_{ij}^nz_{in}x^k)-(\sum_{n=0}^{k}a_{ij}^nz_{jn}x^k)}\\
&=\ds{\left(\sum_{n=0}^ka_{ij}^n(z_{in}-z_{jn})\right)x^k}
\end{array}
\]
So $\ds{\sum_{n=0}^ka_{ij}^n(z_{in}-z_{jn})=0}$ and it can rewritten as 
\[\v_{ij}^{k}\cdot (z_{10},z_{20}, ..., z_{m0}, z_{11}, z_{21}, ..., z_{m1}, z_{12}, ..., z_{1k}, z_{2k}, ..., z_{mk})=0.\]
So 
\[\mbox{dim}H_{\T}^{k}(\Ga)=(k+1)m-r_{k}(\Ga).\]
By definition of $c_{i}$'s, we have
\[\mbox{dim}H_{\T}^{k}(\Ga)=\sum_{n=0}^{k}(k+1-n)c_{n},\]
so 
\begin{equation}\label{eq:recursive}
\sum_{n=0}^{k}(k+1-n)c_{n}=(k+1)m-r_{k}(\Ga).
\end{equation}

In particular, when $k=0$, this gives us
\[c_{0}=m-r_{0}.\]
This proves \eqref{eq:c_0}.

Now the formula~\eqref{eq:c_k} for $c_{k}, k\geq 1$, can be proved inductively. 

The base case $k=1:$  

By equation~\eqref{eq:recursive}, 
\[c_{1}=2m-r_{1}-2c_0=2m-r_{1}-2(m-r_0)=2r_{0}-r_{1}=2r_{0}-r_{1}-r_{-1}=s_{1}+s_{-1}-2s_{0}.\]
Now for $k>1$, we assume the formula~\eqref{eq:c_k} holds for smaller $k$. Then by equation~\eqref{eq:recursive}, we have
\[\begin{array}{cl}
c_{k}&=\ds{(k+1)m-r_{k}-\sum_{n=0}^{k-1}(k+1-n)c_{n}}\\
&=\ds{(k+1)m-r_{k}-\sum_{n=1}^{k-1}(k+1-n)(2r_{n-1}-r_{n-2}-r_{n})-(k+1)(m-r_0)}.
\end{array}\]
Upon straightforward simplification, this becomes exactly what we want:
\[c_{k}=2r_{k-1}-r_{k}-r_{k-2}=s_{k}+s_{k-2}-2s_{k-1}.\]
\end{proof}
\begin{example}
We continue with Example~\ref{ex:first}. Applying Proposition~\ref{prop:c_i and r_i} to this example gives $c_{0}(\Ga)=1, c_{1}(\Ga)=1$ and $ c_{2} (\Ga)=2$.
\end{example}

The following lemma will be used repeatedly throughout the paper. We call it the {\it Deleting Lemma}.
\begin{lemma}[Deleting Lemma]\label{lemma:deleting}
Assume $v_{t}\in V$ is of degree less than or equal to $k+1$, i.e., $\lambda(v_t)\leq k+1$. Denote by $E_{v_{t}}$ the set of edges containing $v_t$. Define $\Ga'=(V', E')$ by $V'=V\backslash\{v_t\}$ and $E'=E\backslash\{E_{v_{t}}\}$. In other words, $\Ga'$ is the graph obtained by deleting $v_t$ and the edges containing it. Then $s_{k}(\Ga)=s_{k}(\Ga')$.
\end{lemma}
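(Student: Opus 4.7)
The plan is to analyze the structure of the vectors $\v_{ij}^k$ in a block that is ``local'' to the vertex $v_t$. Explicitly, consider the coordinate projection
\[\pi_t : \C^{(k+1)m}\longrightarrow \C^{k+1}\]
onto the positions $t, t+m, t+2m, \ldots, t+km$. For an edge $e_{ij}\in E$ that does not contain $v_t$, the vector $\v_{ij}^k$ has $0$ in each of these positions, so $\pi_t(\v_{ij}^k)=0$. For an edge $e_{tj}\in E_{v_t}$, on the other hand,
\[\pi_t(\v_{tj}^k) \;=\; (1,\, a_{tj},\, a_{tj}^2,\, \ldots,\, a_{tj}^k) \quad \text{(up to an overall sign)},\]
which is a Vandermonde vector in $\C^{k+1}$ in the parameter $a_{tj}$.

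Next I would use the general position hypothesis on $\phi$ to conclude that the scalars $\{a_{tj}\mid e_{tj}\in E_{v_t}\}$ are pairwise distinct: if $a_{tj_1}=a_{tj_2}$ for two distinct edges at $v_t$, then by definition of $a_{ij}$ the three points $\phi(v_t),\phi(v_{j_1}),\phi(v_{j_2})$ would lie on a common line, contradicting Definition~\ref{def:general}. Since $\lambda(v_t)\leq k+1$, the Vandermonde vectors $\{\pi_t(\v_{tj}^k)\mid e_{tj}\in E_{v_t}\}$ are at most $k+1$ distinct Vandermonde vectors of length $k+1$, hence linearly independent.

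Now suppose $\sum_{e_{ij}\in E} c_{ij}\,\v_{ij}^k = 0$ is a linear relation in $\C^{(k+1)m}$. Applying $\pi_t$ and using that only edges in $E_{v_t}$ survive, we obtain
\[\sum_{e_{tj}\in E_{v_t}} c_{tj}\,\pi_t(\v_{tj}^k)=0,\]
and the linear independence just proved forces $c_{tj}=0$ for every $e_{tj}\in E_{v_t}$. Therefore every relation in $\Gamma$ is supported on $E'=E\setminus E_{v_t}$.

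Finally, I would set up the natural identification between relations on $E'$ for the two graphs. For $e_{ij}\in E'$, neither endpoint is $v_t$, so $\v_{ij}^k\in \C^{(k+1)m}$ has $0$ in the columns $t,t+m,\ldots,t+km$; deleting those columns is a linear isomorphism from the span of these vectors onto the span of the corresponding $\v_{ij}^k\in \C^{(k+1)(m-1)}$ for the graph $\Gamma'$. Under this isomorphism the coefficient data of a relation is preserved, so the two spaces of linear relations have the same dimension, which gives $s_k(\Gamma)=s_k(\Gamma')$. The only delicate input is the Vandermonde/general-position argument that kills the coefficients on $E_{v_t}$; everything else is bookkeeping on the coordinates of the $\v_{ij}^k$.
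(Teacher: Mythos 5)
Your proof is correct and is essentially the paper's own argument: the paper likewise restricts a putative relation to the coordinates $t, m+t,\ldots,km+t$ and uses the general-position hypothesis to make the $a_{tj}$ distinct, so that the resulting Vandermonde matrix (with $\lambda(v_t)\leq k+1$ rows) has full row rank, forcing the coefficients on $E_{v_t}$ to vanish. The remaining identification of relations supported on $E'$ with relations for $\Gamma'$ is the same bookkeeping the paper leaves implicit.
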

\begin{proof}
For simplicity, we write $d$ for $\lambda(v_{t})$. Without loss of generality, we may assume $t=1$ and the $d$ edges containing $v_{t}$ are $e_{12}, e_{13}, ..., $and $e_{1,d+1}$. Now assume there is a linear relation among $\{\v_{ij}^{k}\big| e_{ij}\in E\}$:
\begin{equation}\label{eq:relation1}
\sum_{e_{ij}\in E}u_{ij}\v_{ij}^k=\0.
\end{equation}
If we restrict our attention to the components $1, m+1, ..., km+1$, then because the following matrix has full row rank (the assumption of $\phi$ in general position guarantees that $a_{1i}\neq a_{1j}$ for $i\neq j$),
\[\left(
\begin{array}{ccccc}
1& a_{12}& a_{12}^2&\ldots&a_{12}^k\\
1& a_{13}& a_{13}^2&\ldots&a_{13}^k\\
\vdots&\vdots&\vdots&\ddots&\vdots\\
1&a_{1,d+1}&a_{1,d+1}^2&\ldots&a_{1,d+1}^{k}
\end{array}
\right)
\]
we conclude that $u_{1j}=0$ for all $2\leq j\leq d+1$. So 
\[\sum_{e_{ij}\in E'}u_{ij}\v_{ij}^{k}=\0.\]
This means \eqref{eq:relation1} is in fact a linear relation among $\{\v_{ij}^{k}\big| e_{ij}\in E'\}$. So $s_{k}(\Ga)=s_{k}(\Ga')$.
\end{proof}

\begin{corollary}\label{cor:delete}
Assume $v_{t}\in V$. Denote by $E_{v_{t}}$ the set of edges containing $v_t$. Define $\Ga'=(V', E')$ by $V'=V\backslash\{v_t\}$ and $E'=E\backslash\{E_{v_{t}}\}$, then \[0\leq s_{k}(\Ga)-s_{k}(\Ga')\leq \max(\lambda(v_{t})-k-1,0).\]
\end{corollary}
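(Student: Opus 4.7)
The plan is to refine the argument used in the Deleting Lemma (Lemma~\ref{lemma:deleting}) so that it tracks exactly how much information is lost when $\lambda(v_t)$ exceeds $k+1$. The lower bound will come out of a natural inclusion of relation spaces, and the upper bound from a Vandermonde rank count analogous to the one that drove the proof of the Deleting Lemma.

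Without loss of generality take $v_t = v_1$, set $d = \lambda(v_1)$, and let the edges at $v_1$ be $e_{12}, \ldots, e_{1,d+1}$. Write $R_k(\Ga) \subseteq \C^{|E|}$ for the space of coefficient tuples $(u_{ij})$ satisfying $\sum_{e_{ij}\in E} u_{ij}\v_{ij}^k = \0$, and define $R_k(\Ga')$ analogously; by definition $\dim R_k(\Ga) = s_k(\Ga)$ and $\dim R_k(\Ga') = s_k(\Ga')$. Consider the linear map
\[
\pi : R_k(\Ga) \longrightarrow \C^d, \qquad (u_{ij})_{e_{ij}\in E} \longmapsto (u_{1j})_{j=2,\ldots,d+1}.
\]
The first step is to identify $\ker\pi$ with $R_k(\Ga')$: a relation in $R_k(\Ga)$ all of whose $u_{1j}$ vanish uses only edges in $E'$, and since those $\v_{ij}^k$ are supported off the $v_1$-coordinates, the equation in $\C^{(k+1)m}$ is equivalent to the corresponding equation in $\C^{(k+1)(m-1)}$ defining $R_k(\Ga')$. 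This yields the short exact sequence
\[
0 \longrightarrow R_k(\Ga') \longrightarrow R_k(\Ga) \xrightarrow{\ \pi\ } \mathrm{image}(\pi) \longrightarrow 0,
\]
so $s_k(\Ga) - s_k(\Ga') = \dim\,\mathrm{image}(\pi) \geq 0$, which delivers the lower bound immediately.

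For the upper bound I would read off, from any relation $\sum u_{ij}\v_{ij}^k = \0$, the $k+1$ coordinates indexed by $v_1$, namely positions $1,\,1+m,\,\ldots,\,1+km$. Edges in $E'$ contribute nothing there, whereas each $e_{1j}$ contributes $u_{1j}(1, a_{1j}, a_{1j}^2, \ldots, a_{1j}^k)$. Thus every element of $\mathrm{image}(\pi)$ lies in the kernel of the $(k+1)\times d$ Vandermonde-type matrix whose $j$th column is $(1, a_{1j}, a_{1j}^2, \ldots, a_{1j}^k)^T$. General position of $\phi$ forces the $a_{1j}$ to be pairwise distinct (otherwise $\phi(v_1), \phi(v_i), \phi(v_j)$ would be collinear for some pair $i\neq j$ of neighbors of $v_1$), so this matrix has rank $\min(d, k+1)$ and its kernel has dimension $\max(d - k - 1, 0)$. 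Hence $s_k(\Ga) - s_k(\Ga') \leq \max(\lambda(v_t) - k - 1, 0)$.

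I do not anticipate a serious obstacle here. The one point that requires care is the identification of $\ker\pi$ with $R_k(\Ga')$ itself (rather than with a proper extension of it, which would only give $s_k(\Ga') \leq \dim\ker\pi$ and break the short exact sequence count), together with the observation that although the Vandermonde condition is only a \emph{necessary} condition on elements of $\mathrm{image}(\pi)$, it is enough to upper-bound its dimension and hence to conclude.
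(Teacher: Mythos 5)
Your proof is correct and follows essentially the same route the paper intends: the paper simply asserts the corollary ``immediately follows'' from the Deleting Lemma, and your argument --- identifying $\ker\pi$ with the relation space of $\Ga'$ and bounding $\dim\,\mathrm{image}(\pi)$ by the kernel dimension $\max(\lambda(v_t)-k-1,0)$ of the Vandermonde matrix on the $v_t$-coordinates --- is exactly the quantitative refinement of the Deleting Lemma's proof that the author leaves implicit.
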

\begin{proof}
It immediately follows from the Deleting Lemma~\ref{lemma:deleting}.
\end{proof}

\begin{lemma}\label{lemma:vanishing}
Given $\Ga=(V,E)$, when $\ds{k\geq \max_{i}\lambda(v_i)-1}$, we have $r_{k}(\Ga)=|E|$, $s_{k}(\Ga)=0$. And then it follows from Proposition~\ref{prop:c_i and r_i} that $c_{k+2}(\Ga)=0$.
\end{lemma}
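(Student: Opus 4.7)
The plan is to prove $s_k(\Ga)=0$ by induction on $|V|$, using the Deleting Lemma, and then derive the other conclusions as immediate consequences.

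First I would set up the induction. The base case is $|V|=0$ (or equivalently $|E|=0$), where $M_k(\Ga)$ is the empty matrix, so $r_k(\Ga)=0=|E|$ and $s_k(\Ga)=0$ trivially. For the inductive step, suppose the lemma holds for all graphs with fewer vertices, and let $\Ga=(V,E)$ satisfy $k\geq \max_i \lambda(v_i)-1$, i.e.\ every vertex has degree at most $k+1$. Pick any vertex $v_t\in V$ and form $\Ga'=(V\setminus\{v_t\},E\setminus E_{v_t})$. Since $\lambda(v_t)\leq k+1$, the Deleting Lemma~\ref{lemma:deleting} applies and gives $s_k(\Ga)=s_k(\Ga')$. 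Deleting a vertex can only decrease the degree of the remaining vertices, so $\Ga'$ still satisfies $k\geq \max_{v\in V'}\lambda(v)-1$. By the inductive hypothesis, $s_k(\Ga')=0$, hence $s_k(\Ga)=0$ and $r_k(\Ga)=|E|-s_k(\Ga)=|E|$.

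For the final claim about $c_{k+2}(\Ga)$, I would simply observe that the hypothesis $k\geq \max_i \lambda(v_i)-1$ also holds for $k+1$ and $k+2$, so the vanishing just proved yields $s_k(\Ga)=s_{k+1}(\Ga)=s_{k+2}(\Ga)=0$. Plugging these into the formula
\[
c_{k+2}(\Ga)=s_{k+2}(\Ga)+s_k(\Ga)-2s_{k+1}(\Ga)
\]
from Proposition~\ref{prop:c_i and r_i} gives $c_{k+2}(\Ga)=0$.

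The only real content is the inductive step, and the Deleting Lemma does all of the work: its Vandermonde-style argument forces the coefficients of the deleted edges in any linear relation to vanish, so deleting a vertex of admissible degree does not change $s_k$. The induction is then routine since the degree bound is preserved under vertex deletion, and there is no subtle obstacle to overcome.
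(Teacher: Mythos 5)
Your proof is correct and follows essentially the same route as the paper: the paper's proof is simply ``repeated application of the Deleting Lemma gives $s_k(\Ga)=0$,'' which is exactly your induction on $|V|$ made explicit, and the derivation of $c_{k+2}(\Ga)=0$ from $s_k=s_{k+1}=s_{k+2}=0$ via Proposition~\ref{prop:c_i and r_i} is identical. The one observation you make that the paper leaves implicit --- that the degree bound is preserved under vertex deletion, so the Deleting Lemma remains applicable at every stage --- is a worthwhile clarification but not a different argument.
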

\begin{proof}
The repeated application of the Deleting Lemma will give $s_{k}(\Ga)=0$. Hence $r_{k}(\Ga)=|E|-s_{k}(\Ga)=|E|$ and $c_{k+2}=s_{k+2}-2s_{k+1}+s_{k}=0$.
\end{proof}

\begin{proposition}\label{prop:sum c_i and ic_i}
Given $\Ga=(V,E)$, we have 
\begin{equation}\label{eq:sum c_i}
\sum_{i=0}^{\infty}c_{i}(\Ga)=m\ ,
\end{equation}
\begin{equation}\label{eq:sum ic_i}
\sum_{i=0}^{\infty}ic_{i}(\Ga)=|E|.
\end{equation}
\end{proposition}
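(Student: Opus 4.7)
The plan is direct: apply the two equivalent expressions for $\dim_{\C} H_{\T}^{k}(\Ga)$ already obtained in the proof of Proposition~\ref{prop:c_i and r_i}, namely
\[\sum_{n=0}^{k}(k+1-n)\,c_n(\Ga) \;=\; (k+1)m - r_k(\Ga).\]
By Lemma~\ref{lemma:vanishing}, the characteristic numbers vanish eventually (so the left-hand sum is essentially finite), and $r_k(\Ga) = |E|$ for $k$ sufficiently large.

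So, for all large enough $k$, the left side becomes $(k+1)\sum_{i\geq 0} c_i(\Ga) - \sum_{i\geq 0} i\, c_i(\Ga)$ while the right side becomes $(k+1)m - |E|$. This gives the identity in $k$
\[(k+1)\sum_{i\geq 0} c_i(\Ga) \;-\; \sum_{i\geq 0} i\, c_i(\Ga) \;=\; (k+1)m - |E|,\]
valid for all sufficiently large integers $k$. Since both sides are linear functions of $k$ agreeing on infinitely many values, matching the coefficient of $k+1$ and the constant term immediately yields \eqref{eq:sum c_i} and \eqref{eq:sum ic_i} respectively.

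There is essentially no obstacle; the key inputs are the dimension formula established in the proof of Proposition~\ref{prop:c_i and r_i} and the vanishing from Lemma~\ref{lemma:vanishing}, both already available. A more self-contained variant, if one prefers to avoid invoking the dimension formula twice, would telescope directly from Proposition~\ref{prop:c_i and r_i}: using $c_0(\Ga) = m - r_0(\Ga)$ together with $c_k(\Ga) = 2r_{k-1}(\Ga) - r_k(\Ga) - r_{k-2}(\Ga)$ for $k\geq 1$, the partial sum $\sum_{k=1}^{N}(2r_{k-1} - r_k - r_{k-2})$ collapses to $r_0(\Ga) + r_{N-1}(\Ga) - r_N(\Ga)$, while $\sum_{k=1}^{N} k(2r_{k-1} - r_k - r_{k-2})$ collapses to $(N+1)r_{N-1}(\Ga) - N r_N(\Ga)$ (the interior coefficients $2(j+1) - j - (j+2)$ all vanish). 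Taking $N$ large so that $r_{N-1}(\Ga) = r_N(\Ga) = |E|$, these evaluate to $r_0(\Ga)$ and $|E|$; combining the first with $c_0(\Ga) = m - r_0(\Ga)$ then produces the two claims. The only point requiring care in either approach is bookkeeping the boundary terms in the telescoping, which is purely mechanical.
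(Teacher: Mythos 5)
Your proof is correct and follows essentially the same route as the paper: both rest on the dimension formula $\sum_{n=0}^{k}(k+1-n)c_n(\Ga)=(k+1)m-r_k(\Ga)$ together with the stabilization $r_k(\Ga)=|E|$ from Lemma~\ref{lemma:vanishing}. The only (harmless) difference is that you extract \eqref{eq:sum c_i} by matching coefficients of the affine function of $k$, whereas the paper simply cites Theorem~\ref{theorem:freeness} for it and then uses it to isolate $\sum_i ic_i$.
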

\begin{proof}
Equation~\eqref{eq:sum c_i} is part of the content of Theorem~\ref{theorem:freeness}.  As for $\eqref{eq:sum ic_i}$, according to Lemma~\ref{lemma:vanishing}, we can pick $k\in \N$, such that $r_{k}(\Ga)=|E|$ and $c_{i}=0$ for all $i\geq k+1$. Then by formula~\eqref{eq:recursive}, 
\[\sum_{i=0}^{k}(k+1-i)c_{i}=(k+1)m-r_{k}=(k+1)m-|E|.\]
So 
\[\sum_{i=0}^{\infty}ic_i=\sum_{i=0}^{k}ic_i=\sum_{i=0}^{k}(k+1)c_{i}-(k+1)m+|E|=|E|.\] 
\end{proof}

\begin{remark}
When $\Ga$ is a regular graph of degree $d$, then it follows from Proposition~\ref{prop:sum c_i and ic_i} that the average degree of a set of generators of $H_{\T}^{*}(\Ga)$ is $\dfrac{d}{2}$. In this sense, it is a weaker version of Poincare duality.
\end{remark}

\begin{proposition}\label{prop:regular graph}
Assume $\Ga=(V,E)$ is a connected regular graph of degree $d$, then $c_{i}=0$ for $i>d$ and $c_{d}=0$ or $1$. 
\end{proposition}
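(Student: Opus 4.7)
The plan is to combine the formula $c_k(\Ga) = 2r_{k-1}(\Ga) - r_k(\Ga) - r_{k-2}(\Ga)$ from Proposition~\ref{prop:c_i and r_i} with Lemma~\ref{lemma:vanishing} to reduce the proposition to a bound on $s_{d-2}(\Ga)$, and then carry out a local Vandermonde analysis at each vertex, patched together using connectedness of $\Ga$.

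First, since $\Ga$ is $d$-regular, Lemma~\ref{lemma:vanishing} gives $r_k(\Ga) = |E|$ for every $k \geq d-1$. For $i \geq d+1$ all three of $r_i, r_{i-1}, r_{i-2}$ equal $|E|$, so Proposition~\ref{prop:c_i and r_i} yields $c_i(\Ga) = 2|E| - |E| - |E| = 0$. For $i = d$ the same formula gives
\[
c_d(\Ga) \;=\; 2|E| - |E| - r_{d-2}(\Ga) \;=\; s_{d-2}(\Ga),
\]
so it remains to prove $s_{d-2}(\Ga) \leq 1$.

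To bound $s_{d-2}(\Ga)$, consider any linear relation $\sum_{e_{ij} \in E} u_{ij}\v_{ij}^{d-2} = \0$. Fix a vertex $v_t$ with neighbors $v_{j_1}, \ldots, v_{j_d}$, write $u_{e_k}$ for the coefficient attached to the edge $e_k$ joining $v_t$ and $v_{j_k}$, and let $\epsilon_{t,e_k} \in \{\pm 1\}$ record the sign introduced by the $i<j$ convention in the definition of $\v_{ij}^{d-2}$. Reading off the block of $d-1$ components indexed by $t, t+m, \ldots, t+(d-2)m$ of the relation yields
\[
\sum_{k=1}^{d} \epsilon_{t,e_k}\, u_{e_k}\, a_{tj_k}^{n} \;=\; 0, \qquad n = 0, 1, \ldots, d-2.
\]
The coefficient matrix is the $(d-1) \times d$ Vandermonde in $a_{tj_1}, \ldots, a_{tj_d}$; general position of $\phi$ forces these slopes to be pairwise distinct (otherwise $v_t$ would be collinear with two of its neighbors), so this Vandermonde has rank $d-1$ and one-dimensional kernel, whose entries are all nonzero (each is, up to sign, a $(d-1)\times(d-1)$ Vandermonde determinant). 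Hence the restriction of $(u_{ij})$ to the $d$ edges at $v_t$ is determined by a single scalar $\mu_t \in \C$.

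Finally, each edge $e_{ij}$ carries one coefficient $u_{ij}$ but is constrained by both endpoints, so the local one-parameter families at $v_i$ and at $v_j$ must give the same value of $u_{ij}$. This forces $\mu_i = \rho_{ij}\,\mu_j$ for an explicit nonzero constant $\rho_{ij}$ built from the two Vandermonde kernels. Since $\Ga$ is connected, fixing $\mu_{t_0}$ at one vertex propagates to a unique $\mu_t$ at every other vertex along any path, so the solution space is at most one-dimensional. This gives $s_{d-2}(\Ga) \leq 1$, hence $c_d(\Ga) \in \{0,1\}$. The only delicate step is the compatibility of the edge constraints $\mu_i = \rho_{ij}\mu_j$ around cycles of $\Ga$, but that consistency check is what decides whether $c_d$ equals $0$ or $1$ and is irrelevant to the upper bound itself.
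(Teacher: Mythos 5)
Your proof is correct, but it reaches the key bound $s_{d-2}(\Ga)\leq 1$ by a genuinely different route than the paper. The paper removes a single edge $e_{ij}$ to create two vertices of degree $d-1$, notes $s_{d-2}(\Ga)\leq s_{d-2}(\Ga')+1$, and then kills $s_{d-2}(\Ga')$ entirely by repeated application of its Deleting Lemma (peeling off low-degree vertices one at a time, using connectedness to keep the peeling going); the conclusion then follows from $c_d=s_d+s_{d-2}-2s_{d-1}$. You instead analyze the relation space directly: at each vertex the $d-1$ relevant coordinates of the relation give a $(d-1)\times d$ Vandermonde system whose one-dimensional kernel has all entries nonzero, so the coefficients on the star of each vertex are a single scalar $\mu_t$ times a fixed vector, and connectedness propagates one scalar to the whole graph, giving $\dim\leq 1$. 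Both arguments ultimately rest on the same Vandermonde nondegeneracy coming from general position (it is also the engine inside the paper's Deleting Lemma), but yours is more self-contained for this particular statement and has the added virtue of making visible exactly when $c_d=1$ versus $0$, namely the consistency of the transition constants $\rho_{ij}$ around cycles; the paper's route is shorter given the lemmas already in hand and is the version that generalizes to the edge- and vertex-disconnection arguments used later. One small point worth stating explicitly in your write-up: injectivity of the linear map $u\mapsto\mu_{t_0}$ is what gives the bound, and it needs both that each local kernel entry is nonzero (so $\mu_{t_0}=0$ forces all $\mu_t=0$) and that every vertex actually has degree $d\geq d-1$ so the local system is the full Vandermonde --- both of which you have.
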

\begin{proof}
The first part of the proposition is just a special case of Lemma~\ref{lemma:vanishing}. 

As for the second part, pick any edge $e_{ij}$, define $\Ga'=(V, E')$ by $E'=E\backslash \{e_{ij}\}$.  Then we have $s_{d-2}(\Ga)\leq s_{d-2}(\Ga')+1$.  By repeatedly applying the Deleting Lemma to $\Ga'$, we get $s_{d-2}(\Ga')=0$. So $s_{d-2}(\Ga)\leq 1$.   

By Lemma~\ref{lemma:vanishing}, we have $s_{d-1}(\Ga)=s_{d}(\Ga)=0$.  So \[c_{d}(\Ga)=s_{d}(\Ga)+s_{d-2}(\Ga)-2s_{d-1}(\Ga)\leq 1.\] 
\end{proof}
\begin{remark}
As the GKM graph of GKM manifolds are regular graphs, Proposition~\ref{prop:regular graph} can be viewed as a weaker version of the fact that the top geometric Betti number of a connected compact oriented manifold is $1$. If the connected regular graph $\Ga$ of degree $d$ is also equipped with an axial function in the sense of \cite{GZ:graph}, then one can show by Theorem 2.2 in \cite{GZ:graph} that $c_{d}(\Ga)=1$ and $\ds{(\prod_{e_{1j}\in E}\alpha(e_{1j}),0, 0, \cdots, 0)}$ can be taken as the generator of $H_{\T}^{*}(\Ga)$ in degree $d$. 
\end{remark}
\begin{figure}[!ht]
\includegraphics[height=60mm]{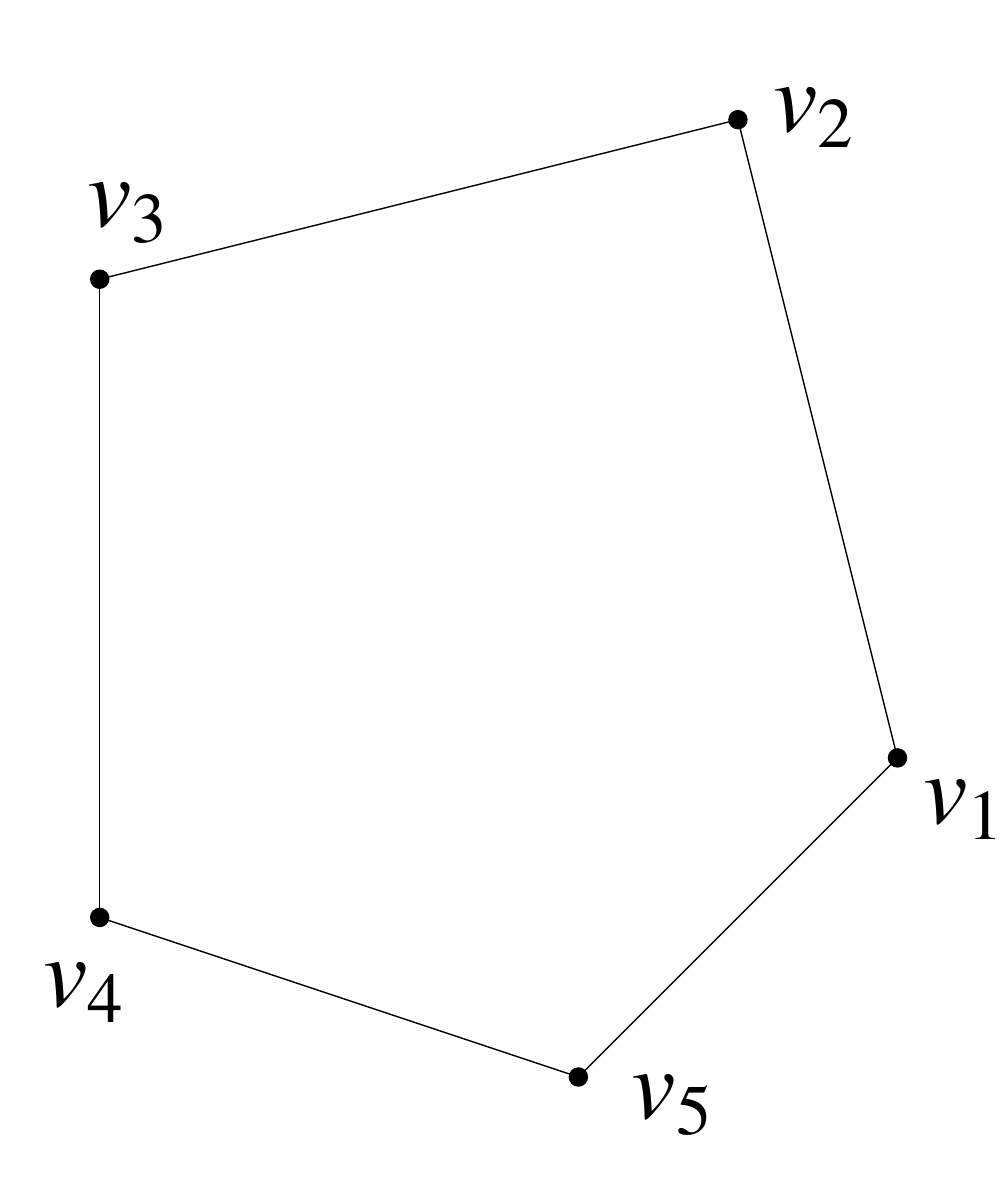}
\caption{A regular graph of degree 2}
\label{fig:2-regular}
\end{figure}
\begin{example}\label{ex:2-regular}
Figure~\ref{fig:2-regular} shows a regular graph of degree $2$ whose vertices are in general position. By Proposition~\ref{prop:regular graph}, $c_i=0$ for $i\geq 3$. Also we know $c_0=1$ as the graph is connected. Then the weaker version of Poincare duality, Proposition~\ref{prop:sum c_i and ic_i}, forces $c_2$ to be $1$, hence $c_1=3$. 

In general, for regular graph of degree $2$ and $m$ vertices in general position, we have $c_0=c_2=1$ and $c_1=m-2$.
\end{example}

Next we study the relations between the characteristic numbers and the combinatorial Betti numbers. Proposition~\ref{prop:characteristic vs Betti} provides a generalization of inequality (2.28) in \cite{GZ:graph} in dimension two.

\begin{proposition}\label{prop:characteristic vs Betti}
Given $\Gamma=(V,E)$, order
the vertices as $v_1,\cdots, v_{m}$. Define the index of $v_i$ as
\[\mu_{i}=\bigg|\{j\in \N\big| j>i \mbox{\ and\ } e_{ij}\in E\}\bigg|,\]
and let 
\[b_k=\bigg|\{i\in \N\big| \mu_{i}=k\}\bigg|.\]
Then
 \[\sum_{i=0}^{k}(k+1-i)c_i\leq \sum_{i=0}^{k}(k+1-i)b_i,\mbox{\ for\ any} \ k\geq 0.\]
\end{proposition}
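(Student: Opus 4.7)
The plan is to reinterpret the inequality as a bound on the Hilbert function of $H_{\T}^{*}(\Ga)$ in degree $k$. By the identity \eqref{eq:recursive} established in Proposition~\ref{prop:c_i and r_i}, the left-hand side is precisely $\dim_{\C} H_{\T}^{k}(\Ga)$. Grouping the right-hand side by the value of $\mu_i$ gives
\[\sum_{i=0}^{k}(k+1-i)\,b_i = \sum_{i=1}^{m}\max\bigl(0,\,k+1-\mu_i\bigr),\]
which is exactly $\sum_{i=1}^{m}\dim_{\C}(P_i\cdot\C[x,y])_{k}$, where $P_i := \prod_{\{j>i \,:\, e_{ij}\in E\}}\alpha(e_{ij})$ is a polynomial of degree $\mu_i$. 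So it suffices to dominate $\dim H_{\T}^{k}(\Ga)$ termwise by the degree-$k$ dimensions of these rank-one free $\C[x,y]$-modules, one per vertex.

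To achieve this I would introduce a flag of graded submodules $0 = F_0 \subseteq F_1 \subseteq \cdots \subseteq F_m = H_{\T}^{*}(\Ga)$ defined by
\[F_i := \bigl\{(f_1, \dots, f_m) \in H_{\T}^{*}(\Ga) \;\bigm|\; f_j = 0 \text{ for all } j > i\bigr\}.\]
The ``evaluate at $v_i$'' map $\rho_i : F_i \to \C[x,y]$, $f \mapsto f_i$, is a graded $\C[x,y]$-linear map with kernel exactly $F_{i-1}$, so $F_i/F_{i-1}$ embeds into $\C[x,y]$ as a graded submodule. For any $f \in F_i$ and any edge $e_{ij}$ with $j > i$, the GKM condition $\alpha(e_{ij}) \mid f_i - f_j$ combined with $f_j = 0$ forces $\alpha(e_{ij}) \mid f_i$. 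The general position hypothesis guarantees that the linear forms $\{\alpha(e_{ij}) : j > i,\, e_{ij}\in E\}$ attached to the common vertex $v_i$ are pairwise non-proportional, hence pairwise coprime in $\C[x,y]$, so their product $P_i$ divides $f_i$. Therefore $\rho_i(F_i) \subseteq P_i \cdot \C[x,y]$.

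Summing graded dimensions across the filtration then gives
\[\dim H_{\T}^{k}(\Ga) = \sum_{i=1}^{m}\dim (F_i/F_{i-1})_k \;\leq\; \sum_{i=1}^{m}\dim (P_i\cdot\C[x,y])_k = \sum_{j=0}^{k}(k+1-j)\,b_j,\]
which together with the identification of the left-hand side yields the stated inequality. I do not anticipate any real obstacle; the only point needing verification is the pairwise coprimality of the $\alpha(e_{ij})$ sharing an endpoint $v_i$, and this is immediate from Definition~\ref{def:general}, since two such edges having proportional axial values would place $\phi(v_i)$ and the other two endpoints on a common line.
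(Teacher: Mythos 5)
Your proof is correct, but it takes a genuinely different route from the paper's. The paper first proves (via its Corollary~\ref{cor:delete} of the Deleting Lemma, applied to $v_1,v_2,\dots,v_m$ in order) the bound $s_k(\Ga)\leq \sum_{j}\max(\mu_j-(k+1),0)$ on the dimension of the space of linear relations among the edge vectors $\v_{ij}^k$, and then converts this into the stated inequality using $\sum_i b_i=m$, $\sum_i ib_i=|E|$ and equation~\eqref{eq:recursive}; the essential input there is the Vandermonde-type nonsingularity coming from the distinctness of the slopes $a_{ij}$ at a common vertex. You instead work directly on the module $H_{\T}^{*}(\Ga)$, filtering it by support ($F_i=\{f: f_j=0 \text{ for } j>i\}$) and embedding each graded quotient $F_i/F_{i-1}$ into the principal ideal $P_i\cdot\C[x,y]$ with $\deg P_i=\mu_i$; your essential input is the pairwise coprimality of the axial values $\alpha(e_{ij})$ at a common vertex, which is the same general-position fact in dual clothing. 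Both arguments are sound; the identification of the left-hand side with $\dim_\C H_{\T}^{k}(\Ga)$ that you use is exactly equation~\eqref{eq:recursive}. The paper's version is essentially free once the Deleting Lemma machinery (reused heavily in Sections~4 and~5) is in place, whereas your filtration argument is self-contained, more conceptual, and generalizes more readily --- for instance to axial functions in more than two variables, where freeness of the graph cohomology can fail but the Hilbert-function inequality $\dim H_{\T^l}^{k}(\Ga)\leq\sum_i\dim(P_i\C[x_1,\dots,x_l])_k$ survives verbatim.
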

\begin{proof}
 First observe that $\displaystyle \sum_{i=0}^{\infty}b_i=|V|=m$ because each vertex is counted exactly once.
 Secondly, we claim that $\displaystyle \sum_{i=0}^{\infty}ib_i=|E|$. To see this, count the edge set in the following way. First look at a vertex $v_{i}$ with $\mu_{i}=0$, and see how many edges connect $v_{i}$ with some vertex $v_{j}$ whose subscript $j$ is larger than $i$.  The answer must be $0$ by the definition of $\mu_{i}$. Then we look at a vertex $v_i$ with $\mu_{i}=1$, in this case, there should be $1$ edge connecting $v_{i}$ with some vertex of larger subscript. Repeat this process for each vertex. Then altogether we will have counted $\displaystyle \sum_{i=0}^{\infty}ib_i$ many edges. On the other hand, we see that every edge in $E$ is counted once and only once. So $\displaystyle \sum_{i=0}^{\infty}ib_i=|E|$.
 
  Denote by $\Ga'$ the subgraph of $\Ga$ obtained by deleting $v_{1}$ and the edges containing $v_{1}$. Applying Corollary~\ref{cor:delete} to $v_1$, we get
\[s_{k}(\Ga)\leq s_{k}(\Ga')+\max(\mu_{1}-(k+1), 0).\]
Applying Corollary~\ref{cor:delete} to $v_{2}, v_{3}, ..., v_{m}$ in order, we get
\[\begin{array}{cl}
s_{k}(\Ga)&\ds{\leq \sum_{j=1}^{m}\max(\mu_{j}-(k+1), 0)}\\[0.4ex]
&\ds{=\sum_{i=k+2}^{\infty}(i-(k+1))b_{i}}\\[0.4ex]
&\ds{=\sum_{i=0}^{\infty}(i-(k+1))b_i-\sum_{i=0}^{k}(i-(k+1))b_i}\\[0.4ex]
&\ds{=\sum_{i=0}^{\infty}ib_i-(k+1)\sum_{i=0}^{\infty}b_i+\sum_{i=0}^{k}(k+1-i)b_i}\\[0.4ex]
&=|E|-(k+1)m+\ds{\sum_{i=0}^{k}(k+1-i)b_i}.
\end{array}\]
So 
 $$\begin{array}{cl}
 \ds{\sum_{i=0}^{k}(k+1-i)c_i}&=(k+1)m-r_k\\[0.2ex]
         &=(k+1)m-(|E|-s_k)\\[0.2ex]
         &\leq (k+1)m-|E|+|E|-(k+1)m+\ds{\sum_{i=0}^{k}(k+1-i)b_i}\\[0.2ex]
         &=\ds{\sum_{i=0}^{k}(k+1-i)b_i},
\end{array}$$
where  the first equality is equation \eqref{eq:recursive}.
\end{proof}

We next compute the characteristic numbers of a complete graph. They turn out to be what we would expect, precisely the Betti numbers of complex project space.

\begin{definition}
Define $\omega\in H_{\T}^{1}(\Ga)$ by $\omega=(x_{1}x+y_{1}y, x_{2}x+y_{2}y, ..., x_{m}x+y_{m}y)$, where $(x_{i}, y_{i})=\phi(v_{i})$. We call this the {\it equivariant symplectic form} of $\Ga$. 

If we denote by $I$ the ideal of $H_{\T}^{*}(\Ga)$ generated by $(x, x, ..., x)$ and $(y, y, ..., y)$, then the quotient ring $H_{\T}^{*}(\Ga)/I$, which we will denote by $H^{*}(\Ga)$, is a vector space of dimension $m$.  A module basis of $H_{\T}^{*}(\Ga)$ descends to a vector space basis of $H^{*}(\Ga)$. The image of $\omega$ in $H^{*}(\Ga)$ is denoted by $\tilde{\omega}$ and is called the {\it ordinary symplectic form} of $\Ga$. As graded $\C[x,y]$-modules, we have $H_{\T}^{*}(\Ga)\cong H^{*}(\Ga)\otimes_{\C}\C[x,y]$.

 If $\Ga'=(V', E')$ is a subgraph of $\Ga=(V,E)$, then $\Ga'$ comes with $\phi'$ and $\alpha'$ that is induced by $\phi$ and $\alpha$. If we denote by $i$ the natural inclusion map $i:\Ga'\ra \Ga$, where the map can be viewed as both maps between edges sets and between vertex sets, then it induces a graded $\C[x,y]$-algebra homomorphism $i^*:H_{\T}^{*}(\Ga)\ra H_{\T}^{*}(\Ga')$.  This map descends to a graded ring homomorphism $\tilde{i}^{*}: H^{*}(\Ga)\ra H^{*}(\Ga')$.
\end{definition}

\begin{proposition}\label{prop:complete}
Given $\Ga=(V,E)$ a complete graph on $m$ vertices, we have $c_{0}=c_{1}=c_{m-1}=1$ and $c_{k}=0$ for $k\geq m$. Moreover, $\{\omega^i\big| 0\leq i\leq m-1\}$ forms a basis of the free module $H_{\T}^{*}(\Ga)$ over $\C[x,y]$. 
\end{proposition}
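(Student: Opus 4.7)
The plan is to combine a lower bound coming from the powers of $\omega$ with an upper bound coming from Proposition~\ref{prop:characteristic vs Betti}, and show the two meet exactly.

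First I would establish that $1, \omega, \omega^2, \ldots, \omega^{m-1}$ are linearly independent over $\C[x,y]$. Writing $\phi_j := x_j x + y_j y$, any relation $\sum_{i=0}^{m-1} p_i(x,y)\, \omega^i = 0$ specializes at vertex $v_j$ to $\sum_{i} p_i(x,y)\, \phi_j^{\,i} = 0$ in $\C[x,y]$, for each $j = 1, \ldots, m$. Viewed over the fraction field $\C(x,y)$, the polynomial $P(T) = \sum_i p_i(x,y)\, T^i$ has degree $<m$ yet vanishes at the $m$ values $\phi_1, \ldots, \phi_m$, which are pairwise distinct in $\C(x,y)$ because $\phi$ is injective by general position. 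Hence $P \equiv 0$ and every $p_i = 0$, so the $\C[x,y]$-submodule $N \subseteq H_\T^*(\Ga)$ generated by $1, \omega, \ldots, \omega^{m-1}$ is free of rank $m$ with graded Hilbert series $(1 + t + \cdots + t^{m-1})/(1-t)^2$.

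Next I would apply Proposition~\ref{prop:characteristic vs Betti} with any labeling of the vertices of $K_m$. Since every two vertices are joined by an edge, $\mu_i = m-i$, so $b_k = 1$ for $0 \leq k \leq m-1$ and $b_k = 0$ otherwise. The proposition then yields
\[
\dim_{\C} H_\T^{k}(\Ga) \;=\; \sum_{i=0}^{k}(k+1-i)\, c_i \;\leq\; \sum_{i=0}^{k}(k+1-i) \;=\; \binom{k+2}{2}
\]
for $0 \leq k \leq m-1$. On the other hand, the inclusion $N \subseteq H_\T^*(\Ga)$ forces $\dim_{\C} H_\T^{k}(\Ga) \geq \dim_{\C} N_k = \binom{k+2}{2}$ in the same range. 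Equality throughout, together with a trivial induction on $k$ using $\dim H_\T^k = \sum_{i=0}^k (k+1-i)\, c_i$, gives $c_k = 1$ for every $0 \leq k \leq m-1$.

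Finally, $c_k = 0$ for $k \geq m$ is immediate from Lemma~\ref{lemma:vanishing}, since $\max_i \lambda(v_i) = m-1$. The Hilbert series of $N$ and of $H_\T^*(\Ga)$ now agree in every graded piece (both equal $(1 + t + \cdots + t^{m-1})/(1-t)^2$), so the inclusion $N \subseteq H_\T^*(\Ga)$ must be an equality and $\{1, \omega, \omega^2, \ldots, \omega^{m-1}\}$ is a free $\C[x,y]$-basis of $H_\T^*(\Ga)$. The only genuinely substantive step is the linear independence of the $\omega^i$, which rests on the injectivity supplied by general position; once that is in hand, the remainder is a straightforward matching of Hilbert series.
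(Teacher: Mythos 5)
Your proof is correct, but it takes a genuinely different route from the paper's. The paper argues by induction on $m$: it restricts along the inclusion of the complete graph on $m-1$ vertices to deduce $\tilde{\omega}^i\neq 0$ in $H^{i}(\Ga)$ for $i\leq m-2$, pins down all the $c_i$ from the two counting identities $\sum c_i=m$ and $\sum ic_i=\binom{m}{2}$ of Proposition~\ref{prop:sum c_i and ic_i}, and then rules out $\tilde{\omega}^{m-1}=0$ by a separate divisibility argument (which requires first translating $\phi$ so that the forms $x_ix+y_iy$ are pairwise non-proportional). You instead get everything in one stroke: the root-counting (Vandermonde) argument over the fraction field $\C(x,y)$ shows $1,\omega,\dots,\omega^{m-1}$ are $\C[x,y]$-linearly independent using only injectivity of $\phi$, and the matching upper bound $\dim H_{\T}^{k}(\Ga)\leq\binom{k+2}{2}$ comes from Proposition~\ref{prop:characteristic vs Betti} applied to $K_m$, where $b_k=1$ for $0\leq k\leq m-1$. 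One point worth noting is that your pairing of bounds is the right one: linear independence of the $\omega^i$ only bounds the Hilbert function from below and does not by itself give a generator in each degree, so it could not be combined with $\sum c_i=m$ the way the paper combines its nonvanishing statements; you correctly close the argument with the Hilbert-function upper bound instead. Your approach is shorter, avoids both the induction on $m$ and the normalization of $\phi$, and puts Proposition~\ref{prop:characteristic vs Betti} to concrete use; the paper's approach has the side benefit of exhibiting the compatibility of the classes $\omega^i$ under restriction to complete subgraphs.
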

\begin{proof}
Assume $\phi(v_{i})=(x_i, y_i), 1\leq i \leq m$. First we observe that a translation of $\phi$ to $\phi'=\phi+(p,q)$, where $(p, q)\in \R^2$, will not affect the graph cohomology at all. And the classes $\{\omega^i\big| 0\leq i\leq m-1\}$ forming a module basis is equivalent to the classes $\{(\omega+\gamma)^i\big| 0\leq i\leq m-1\}$ forming a module basis, where $\gamma=(px+qy, px+qy, ..., px+qy)$.
So without loss of generality, we may assume $x_i\neq 0$ for all $i$ and $\dfrac{y_i}{x_i}\neq \dfrac{y_j}{x_j}$ for all $i\neq j$.

We prove the statement by induction on $m$. 

When $m=1$, the statement clearly holds. Now assume we have proved that for $m=k$ and $\Ga$ a complete graph on $m$ vertices, the classes $\{\omega^i\big| 0\leq i\leq m-1\}$ forms a module basis of $H_{\T}^{*}(\Ga)$. And we consider the case $m=k+1$.

Denote by $\Ga'$ the complete graph on $v_{1}, v_{2}, ..., v_{k}$. This is a subgraph of $\Ga$ and we denote by $i$ the inclusion map.  Now $\omega$ is the equivariant symplectic form of $\Ga$, and we notice that $i^*(\omega)$ is the equipvariant symplectic form of $\Ga'$. For any $0\leq i\leq k-1$, by our induction hypothesis, we have $\tilde{i}^*(\tilde{\omega}^i)\neq 0 \in H^{i}(\Ga')$, so $\tilde{\omega}^i\neq 0\in H^{i}(\Ga)$, hence $c_{i}(\Ga)\geq 1$.

Then the relations 
\[\sum_{i=0}^{k}c_{i}(\Ga)=k+1\]
and 
\[\sum_{i=0}^{k}ic_{i}(\Ga)=|E|=\frac{k(k+1)}{2}\]
force $c_{0}(\Ga)=c_{1}(\Ga)=\cdots=c_{k}(\Ga)=1$ and $c_{i}(\Ga)=0$ for $i\geq k+1$.

Now to complete the induction step, we only need to show $\tilde{\omega}^k\neq 0 \in H^{k}(\Ga)$. Assume this is not the case, then there exists homogeneous polynomials $f_{i}(x,y)$ of degree $i$, $1\leq i\leq k$, such that 
\[\omega^k = f_{k}+f_{k-1}\omega+\cdots+f_{1}\omega^{k-1}\in H_{\T}^{k}(\Ga),\]
where we have identified the polynomial $f_{i}$ with $(f_{i}, f_{i}, ..., f_{i})\in H_{\T}^{i}(\Ga)$.

This implies the following equation
\begin{equation}\label{eq:omega^k}
(ax+by)^k\equiv f_{k}(x,y)+f_{k-1}(x,y)(ax+by)+\cdots+f_{1}(x,y)(ax+by)^{k-1},\end{equation}
where $\equiv$ means the two sides are equal as polynomials in $x$ and $y$, holds for $(a,b)=(x_{i}, y_{i})=\phi(v_{i}), 1\leq i\leq k+1$.  We would show this is impossible. 

From ~\eqref{eq:omega^k}, we see that $(ax+by)\big| f_{k}(x,y)$ for all $(a,b)=(x_i, y_i), 1\leq i\leq k+1$. Because $\dfrac{y_i}{x_i}\neq \dfrac{y_j}{x_j}$ for all $i\neq j$, we have $\ds{\prod_{i=1}^{k+1}(x_{i}x+y_{i}y)\big|f_{k}(x,y)}$, this is only possible when $f_{k}(x,y)\equiv 0$. Then it follows from ~\eqref{eq:omega^k} that 
\[(ax+by)^{k-1}\equiv f_{k-1}(x,y)+f_{k-2}(x,y)(ax+by)+\cdots+f_{1}(x,y)(ax+by)^{k-2}\]
holds for $(a, b)=(x_i, y_i), 1\leq i\leq k+1$. In other words,
\[\omega^{k-1}=f_{k-1}+f_{k-2}\omega+\cdots+f_{1}\omega^{k-2}\in H_{\T}^{k-1}(\Ga).\]
This contradicts with that $\tilde{\omega}^{k-1}\neq 0\in H^{k-1}(\Ga)$.
\end{proof}

We conclude this section by proving an analogue of K\"unneth formula.
\begin{definition}
Given $\Ga_1=(V_1, E_1)$ and $\Ga_2=(V_2, E_2)$,  assume $V_1=\{v_1, v_2, ..., v_m\}$ and $V_2=\{u_1, u_2, ..., u_n\}$.
The (Cartesian) product graph $\Ga_1\Box \Ga_2=(V_3, E_3)$ is defined by $V_3=V_1\times V_2$ and two vertices $(v_i, u_s), (v_j, u_t)$ in $V_3$ are connected by an edge in $E_3$ if and only if 
 \[v_i=v_j\in V_1,\ u_s \mbox{\ and\ } u_t \mbox{\ are\ connected\ by\ an\ edge\ in\ } E_2, \]
 or
 \[u_s=u_t\in V_2, \ v_i \mbox{\ and\ } v_j \mbox{\ are \ connected\ by\ an\ edge\ in\ } E_1.\]
\end{definition}

\begin{proposition}
Given $\Ga_1=(V_1, E_1)$ and  $\Ga_2=(V_2, E_2)$, assume $V_1=(v_1, ..., v_m)$, $V_2=(u_1, ..., v_n)$ and $\phi_1:V_1\ra \R^2, \phi_2: V_2\ra \R^2$ are two moment maps in general position.  Assume $\Ga_3=\Ga_1\Box \Ga_2=(V_3, E_3)$ and $\phi_3: V_3=V_1\times V_2\ra \R^2$ defined by $\phi_3(v_i, u_s)=a\phi_1(v_i)+b\phi_2(u_s)$ is also in general position, where $a, b\in \R$ are both non-zero constants. Then the map $p$ defined as follows is an isomorphism of $\C[x,y]$-algebras:
\begin{equation}\label{eq:Kunneth}
\begin{array}{ccl}
p: H_{\T}^{*}(\Ga_1)\otimes_{\C[x,y]}H_{\T}^{*}(\Ga_2)&\longrightarrow &H_{\T}^{*}(\Ga_3)\\[1.5ex]
(f_1, f_2, ..., f_m)\otimes (g_1, g_2, ..., g_n)&\mapsto &(f_1g_1, f_1g_2, ..., f_1g_n, f_2g_1,..., f_mg_n),
\end{array}
\end{equation} 
where the vertices in $V_3$ are ordered as $(v_1, u_1), (v_1, u_2), ..., (v_1, u_n), (v_2, u_1), ..., (v_m, u_n)$. 

As an immediate consequence, we have 
\[c_{k}(\Ga_3)=\sum_{i=0}^{k}c_{i}(\Ga_1)c_{k-i}(\Ga_2).\]
\end{proposition}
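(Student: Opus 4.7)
The plan is to verify that the multiplication map $p$ is a well-defined graded $\C[x,y]$-algebra homomorphism, then upgrade it to an isomorphism via a rank and total-degree comparison combined with an easy localization.

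Every edge of $\Ga_3$ is either \emph{horizontal}, of the form $(v_i,u_s)-(v_j,u_s)$ with $e_{ij}\in E_1$, in which case $\alpha_3 = a\cdot\alpha_1(e_{ij})$ by definition of $\phi_3$, or \emph{vertical}, of the form $(v_i,u_s)-(v_i,u_t)$ with $e_{st}\in E_2$, in which case $\alpha_3 = b\cdot\alpha_2(e_{st})$. Since $a,b\neq 0$, divisibility by $\alpha_3$ is equivalent to divisibility by the corresponding $\alpha_k$. The value of $p((f)\otimes(g))$ at a horizontal edge is $(f_i - f_j)g_s$, divisible by $\alpha_1(e_{ij})$ since $(f)\in H_\T^*(\Ga_1)$, and the vertical case is symmetric. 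Hence $p$ is well-defined; by inspection it is a graded $\C[x,y]$-algebra homomorphism.

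By Theorem~\ref{theorem:freeness} each $H_\T^*(\Ga_k)$ is a graded free $\C[x,y]$-module of rank $|V_k|$, so both sides of \eqref{eq:Kunneth} are graded free of rank $mn$. Localize at $\C[x,y]\setminus\{0\}$: each $\alpha(e)$ becomes a unit in $\C(x,y)$, trivializing the divisibility conditions, so $H_\T^*(\Ga_k)\otimes_{\C[x,y]}\C(x,y)\cong \C(x,y)^{|V_k|}$ with basis indexed by vertices. Under $p$, the tensor of the basis vector at $v_i$ with that at $u_s$ maps to the basis vector at $(v_i,u_s)\in V_3$, so $p$ is a bijection after localization; in particular $p$ is injective.

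Represent $p$ as a square matrix $A$ of size $mn$ in homogeneous bases. Then $\det(A)\neq 0$ by the previous step, and $\det(A)$ is homogeneous of degree equal to the total degree of a basis of the LHS minus that of a basis of the RHS. By Proposition~\ref{prop:sum c_i and ic_i}, a homogeneous basis of $H_\T^*(\Ga_k)$ has total degree $\sum_i ic_i(\Ga_k)=|E_k|$, so a product basis of the LHS has total degree $n|E_1|+m|E_2|$ while a basis of the RHS has total degree $|E_3|$. Counting horizontal and vertical edges of $\Ga_3$ yields $|E_3|=n|E_1|+m|E_2|$, so $\det(A)$ has degree $0$, hence is a nonzero scalar. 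Therefore $A$ is invertible over $\C[x,y]$ and $p$ is an isomorphism. The key subtle step is this degree-zero determinant argument: both the rank match \emph{and} the total-degree match, each furnished by the results of Sections~\ref{sec: freeness} and \ref{sec: characteristic}, are essential in promoting generic injectivity to an honest module isomorphism.

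For the corollary, decompose each side as a graded free $\C[x,y]$-module: $H_\T^*(\Ga_k)=\bigoplus_i \C[x,y](-i)^{c_i(\Ga_k)}$. Then $H_\T^*(\Ga_1)\otimes_{\C[x,y]}H_\T^*(\Ga_2)=\bigoplus_{i,j}\C[x,y](-i-j)^{c_i(\Ga_1)c_j(\Ga_2)}$, and comparing graded ranks via $p$ with $H_\T^*(\Ga_3)=\bigoplus_k\C[x,y](-k)^{c_k(\Ga_3)}$ yields $c_k(\Ga_3)=\sum_{i+j=k}c_i(\Ga_1)c_j(\Ga_2)$.
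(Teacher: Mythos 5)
Your proof is correct, but it reaches the conclusion by a genuinely different route from the paper's. Both arguments reduce the problem to showing that the matrix $A=M(p)$ of $p$ in homogeneous bases has determinant in $\C^{*}$, and both get the nonvanishing of $\det(A)$ by localizing away from the edge weights. The divergence is in how the determinant is shown to be a \emph{unit}. The paper runs an induction on $|E_1|+|E_2|$: deleting one edge $e$ of $\Ga_1$ with weight $f$ does not change the graph cohomology after localizing at $f$, so by induction $\det(M(p))$ is invertible in $\C[x,y]_f$; repeating with an edge of $\Ga_2$ of weight $g$, and using the general position of $\phi_3$ to see that $f$ and $g$ are non-proportional linear forms, forces $\det(M(p))\in\C^{*}$. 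You instead make a single degree count: $\det(A)$ is homogeneous of degree $\sum(\mbox{source degrees})-\sum(\mbox{target degrees})$, and by Proposition~\ref{prop:sum c_i and ic_i} this equals $n|E_1|+m|E_2|-|E_3|=0$, so the nonzero polynomial $\det(A)$ is a constant. Your route is shorter and avoids induction, at the cost of invoking the identity $\sum_i ic_i=|E|$ for $\Ga_3$ (which requires the general position of $\phi_3$ through the Deleting Lemma, so you use the same hypothesis the paper uses, just in a different place); the paper's route is more self-contained, needing only Theorem~\ref{theorem:freeness} and the elementary observation about the two localizations. One small point to make explicit in your write-up: the entry of $A$ in position $(j,i)$ is homogeneous of degree $d_i-e_j$ (possibly zero when this is negative), so every permutation term of $\det(A)$ has the same degree $\sum_i d_i-\sum_j e_j$; this is what justifies calling $\det(A)$ homogeneous. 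Your derivation of the corollary by comparing graded ranks of $\bigoplus_k\C[x,y](-k)^{c_k}$ on both sides is exactly what the paper leaves implicit.
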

\begin{proof}
It is straightforward to verify that the map $p$ defined by \eqref{eq:Kunneth} is an algebra homomorphism.
We are going to prove it is an isomorphism by induction on $|E_1|+|E_2|$.  If $|E_1|=0$ or $|E_2|=0$, the conclusion obviously holds. Now we assume $|E_1|>0, |E_2|>0$ and $p$ defined by \eqref{eq:Kunneth} is an isomorphism for smaller $|E_1|+|E_2|$.  

Suppose $v_i$ and $v_j$ are connected by an edge in $E_1$, which we will denote by $e_{v_i}^{v_j}$. Suppose $u_s$ and $u_t$ are connected by an edge  in $E_2$, which we will denote by $e_{u_s}^{u_t}$.  Assume $\alpha_k: E_k\ra \C[x,y]_1$ is induced by $\phi_k$ for $k=1, 2, 3$. Denote $\alpha_1(e_{v_i}^{v_j})$ by $f$ and $\alpha_2(e_{u_s}^{u_t})$ by $g$. We first observe that $f$ and $g$ cannot be multiples of each other, otherwise the three vertices $(v_i, u_s), (v_j, u_s), (v_j, u_t)$ in $V_3$ would be on the same line, contradicting the assumption that $\phi_3$ is in general position.  We know both $H_{\T}^{*}(\Ga_1)\otimes_{\C[x,y]}H_{\T}^{*}(\Ga_2)$ and $H_{\T}^{*}(\Ga_3)$ are free modules over $\C[x,y]$ of dimension $mn$, we now fix bases for both.  Then the map $p$
 can be represented by a matrix $M(p)$.  And $p$ is an isomorphism if and only if $M(p)$ is invertible as a matrix with coefficients in $\C[x,y]$, which is equivalent to $\det(M(p))$ being invertible in $\C[x,y]$.  
 
Define $E_1'=E_1\backslash\{e_{v_i}^{v_j}\}$ and $\Ga_1'=(V_1, E_1')$. We may define \[p':  H_{\T}^{*}(\Ga_1')\otimes_{\C[x,y]}H_{\T}^{*}(\Ga_2)\rightarrow H_{\T}^{*}(\Ga_1'\Box\Ga_2)\] 
the similar way we have defined $p$.  
 It follows from the induction hypothesis that $p'$ is an isomorphism.
 For any $\C[x,y]$-module $N$, we will use $N_f$ to denote $\ds{N\otimes_{\C[x,y]}\C[x,y]_f}$, the localization at the polynomial $f$. It follows from the definition of graph cohomology that \[H_{\T}^{*}(\Ga_1)_f=H_{\T}^{*}(\Ga_1')_f,\]
 and 
 \[H_{\T}^{*}(\Ga_1\Box\Ga_2)_f=H_{\T}^{*}(\Ga_1'\Box\Ga_2)_f.\] 
  Then the two maps
  \[p_f: H_{\T}^{*}(\Ga_1)_f\otimes_{\C[x,y]_f}H_{\T}^{*}(\Ga_2)_f\rightarrow H_{\T}^{*}(\Ga_1\Box\Ga_2)_f \]
  and 
  \[p'_f: H_{\T}^{*}(\Ga_1')_f\otimes_{\C[x,y]_f}H_{\T}^{*}(\Ga_2)_f\rightarrow H_{\T}^{*}(\Ga_1'\Box\Ga_2)_f\]
  can be identified. Because $p'$ is an isomorphism, so are $p'_f$ and $p_f$. 
  So $\det(M(p))$ is invertible in $\C[x,y]_f$.  By the same argument, we know that $\det(M(p))$ is also invertible in $\C[x,y]_g$. Since $f$ and $g$ are not multiples of each other, this is only possible when $\det(M(p))\in \C^{*}$. So $p$ is an isomorphism.  This completes the induction step.
  \end{proof}
  
\section{\bf Connectivity properties of GKM graphs of compact Hamiltonian GKM manifolds}\label{sec: connectivity}
In this section we prove two theorems about the connectivity of the GKM graph of a compact Hamiltonian GKM manifold.
\begin{definition}
A graph $\Ga=(V, E)$ is $k$-edge-connected for some $k\in \N$, if for any subset $F=\{e_{i_1j_1}, e_{i_2j_2}, ..., e_{i_tj_t}\}\subseteq E$ with $t<k$, the graph $\Ga'=(V, E')$ defined by $E'=E\backslash F$ is connected.
\end{definition}

\begin{definition}
A graph $\Ga=(V,E)$ is $k$-vertex-connected for some $k\in \N$, if for any subset $U=\{{v_{i_1}, v_{i_2}}, ..., v_{i_{t}}\} \subseteq V$ with $t<k$, the graph $\Ga'=(V', E')$ defined by $V'=V\backslash U, E'=E\backslash E_{U}$ is connected, where $E_{U}=\{e_{ij}\in E\big| v_{i}\in U \mbox{\ or\ }v_{j}\in U\}$.
\end{definition}

Now we are ready to state the main theorems in this section.
\begin{theorem}\label{theorem:edge connected}
Given $\Ga=(V,E)$ a connected regular graph of degree $d$, if $c_{d}(\Ga)=1$, then $\Ga$ is $d$-edge-connected. As a consequence, if a $2d$-dimensional compact connected Hamiltonian GKM manifold has a moment map in general position, then its GKM graph is $d$-edge-connected.
\end{theorem}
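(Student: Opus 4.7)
The plan is to prove the contrapositive: assume $\Ga$ is a connected $d$-regular graph that fails $d$-edge-connectivity, with an edge cut $F$ of size $t \leq d-1$ partitioning $V = U \sqcup W$, and show $c_d(\Ga) = 0$. Combined with the upper bound $c_d(\Ga) \leq 1$ from Proposition~\ref{prop:regular graph}, this rules out $c_d(\Ga) = 1$. The first step is a routine translation: since $\Ga$ is $d$-regular, Lemma~\ref{lemma:vanishing} forces $s_{d-1}(\Ga) = s_d(\Ga) = 0$, so by Proposition~\ref{prop:c_i and r_i}, $c_d(\Ga) = s_{d-2}(\Ga)$. The task becomes to show that the rows $\{\v_{ij}^{d-2} : e_{ij} \in E\}$ of $M_{d-2}(\Ga)$ are linearly independent.

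The second step is a structural analysis of any hypothetical nontrivial relation $\sum_e u_e \v_e^{d-2} = 0$. At each vertex $v_\ell$ of degree $d$, its restriction lies in the $1$-dimensional kernel of a $(d-1) \times d$ Vandermonde matrix in the distinct values $\{a_{\ell k}\}$ (distinct by general position of $\phi$), whose unique-up-to-scale generator is given by Lagrange interpolation and has no zero entries; connectedness of $\Ga$ then propagates the non-vanishing from one edge to all of $E$. Consequently, a nonzero relation is parametrized up to overall scale by per-vertex scalars $\{c_\ell\}_{\ell \in V}$ subject to multiplicative edge-compatibilities $c_i/c_j = \rho_{e_{ij}}$ for explicit rational functions $\rho_e$ in the endpoint $a$-values; equivalently, such a relation exists iff $\prod_{e \in C} \rho_e^{\pm 1} = 1$ along every cycle $C \subset \Ga$.

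The main obstacle is the third step: using the edge cut $F$ to violate the cocycle condition. The approach I would try first is to construct an explicit cycle $C$ crossing $F$ twice: pick two $F$-edges $f_1, f_2$, connect their $U$-endpoints by a path in the induced subgraph $\Ga_U$ and their $W$-endpoints by a path in $\Ga_W$ (possible by connectedness of the components of $\Ga \setminus F$), and compute $\prod_{e \in C} \rho_e^{\pm 1}$ as a rational function in the $a_{ij}$'s; the smallness $t \leq d-1$ of the cut should force this expression to be not identically $1$, so by general position of $\phi$ it is not $1$ for our specific $\phi$, ruling out the relation. An alternative, possibly cleaner, route is localization: inverting $P := \prod_{f \in F} \alpha(f)$ of degree $t$ trivializes the $F$-edge divisibility and yields $H^{*}_{\T}(\Ga)[1/P] \cong (H^{*}_{\T}(\Ga_U) \oplus H^{*}_{\T}(\Ga_W))[1/P]$ as free graded $\C[x,y][1/P]$-modules of rank $m$; comparing generator-degree multisets modulo $\deg P = t$, together with a ``$d$-core'' reduction (iterated Deleting-Lemma deletions of degree-$<\!d$ vertices, starting from the $F$-endpoints, which have degree strictly less than $d$ in $\Ga_U$ and $\Ga_W$) that controls $c_d(\Ga_U)$ and $c_d(\Ga_W)$, should likewise force $c_d(\Ga) = 0$.

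The geometric corollary is then immediate: a $2d$-dimensional compact connected Hamiltonian GKM manifold $M$ has $\beta_{2d}(M) = 1$ by Poincar\'e duality, and as noted in the introduction characteristic numbers coincide with geometric Betti numbers on GKM graphs, so $c_d(\Ga_M) = 1$; the first part of the theorem yields $d$-edge-connectivity of $\Ga_M$.
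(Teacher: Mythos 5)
Your first step (reducing to $c_d(\Ga)=s_{d-2}(\Ga)$ via Lemma~\ref{lemma:vanishing} and Proposition~\ref{prop:c_i and r_i}, so that the goal becomes linear independence of $\{\v_{ij}^{d-2}\mid e_{ij}\in E\}$) is exactly how the paper's argument begins, and your second step is a correct and rather pleasant observation: at a degree-$d$ vertex the coefficients of any relation lie in the one-dimensional kernel of a $(d-1)\times d$ Vandermonde matrix whose generator (by Lagrange/divided differences) has no zero entries, so connectedness propagates nonvanishing and a nonzero relation is governed by a multiplicative cocycle condition on cycles. The geometric corollary at the end is also fine.

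The gap is in your third step, which is where all the work lies, and neither of your two suggested routes closes it. For the cocycle route: even granting that $\prod_{e\in C}\rho_e^{\pm1}$ along your chosen cycle is not \emph{identically} $1$ as a rational function of the $a_{ij}$ (which you do not verify, and which is not obviously forced by $t\le d-1$), you cannot conclude that it differs from $1$ for the given $\phi$. General position in the sense of Definition~\ref{def:general} only forbids three collinear image points, i.e.\ it gives $a_{ij}\neq a_{ik}$ at each vertex; it is far weaker than algebraic genericity of the $a_{ij}$, and the theorem is asserted for \emph{every} such $\phi$, so a ``nonvanishing of a rational function at a generic point'' argument is unavailable. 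The localization route is likewise only a hope (``should likewise force''): comparing generator degrees modulo $\deg P=t$ does not by itself control $c_d$. What actually makes the argument work in the paper is a support statement: any vector in the span of the edge vectors of one side $\Ga_1$ that vanishes off the set $V_3$ of cut-endpoints in $V_1$ already lies in the span of $\{\v_{ij}^{d-2}\mid e_{ij}\in K(V_3)\}$ (Lemma~\ref{lemma:edge vector self contained}), combined with the counting Lemma~\ref{lemma:technical}, which shows the auxiliary graph on the cut-endpoints always has a vertex of degree at most $d-1$ and hence has vanishing $s_{d-2}$ by the Deleting Lemma~\ref{lemma:deleting}. This is packaged as the Disconnecting Lemma~\ref{lemma:disconnect edge}; it forces the coefficients on the cut edges to vanish and splits any relation into relations on $\Ga_1$ and $\Ga_2$, each of which is killed by repeatedly deleting degree-$<d$ vertices. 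Some ingredient of this kind (or an actual computation of your cocycle product) is missing from your proposal.
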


Let $\lceil \frac{d}{2} \rceil$ denote the least integer greater than or equal to $\frac{d}{2}$.
\begin{theorem}\label{theorem:vertex connected}
Given $\Ga=(V,E)$ a connected regular graph of degree $d$, if $c_{d}(\Ga)=1$, then $\Ga$ is $(\lceil \frac{d}{2} \rceil+1)$-vertex-connected. As a consequence, if a $2d$-dimensional compact connected Hamiltonian GKM manifold has a moment map in general position, then its GKM graph is $(\lceil \frac{d}{2} \rceil+1)$-vertex-connected. 
\end{theorem}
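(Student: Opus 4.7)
I argue by contradiction. Suppose $U\subseteq V$ is a vertex cut with $|U|=t\le\lceil d/2\rceil$, and let $A,B$ be two distinct connected components of $\Ga\setminus U$. For each $u\in U$ write $a_u,b_u$ for the number of edges from $u$ to $A,B$ respectively. The starting input is Theorem~\ref{theorem:edge connected}: since $\Ga$ is $d$-edge-connected and the edges between $U$ and $A$ form an edge cut separating $A$ from $V\setminus A$, we have $\sum_u a_u\ge d$, and similarly $\sum_u b_u\ge d$. Combined with $a_u+b_u\le d$, this already excludes $t=1$ and quantitatively constrains the $a_u,b_u$ collectively.

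Next I track $s_{d-2}(\cdot)$. The hypothesis $c_d(\Ga)=1$, together with Lemma~\ref{lemma:vanishing} and Proposition~\ref{prop:c_i and r_i}, gives $s_{d-1}(\Ga)=s_d(\Ga)=0$ and $s_{d-2}(\Ga)=1$. Deleting the vertices of $U$ one at a time using Corollary~\ref{cor:delete} with $k=d-2$, and choosing an ordering that chains along each connected component of the induced subgraph $G_U$ on $U$, each deletion decreases $s_{d-2}$ by $0$ or $1$, with decrease $0$ whenever the current vertex still has a previously deleted neighbor in $U$. Hence
\[ s_{d-2}(\Ga\setminus U)\ge 1-c(G_U), \]
where $c(G_U)$ is the number of components of $G_U$. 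On the other hand, $\Ga\setminus U$ is the disjoint union of its components, so $s_{d-2}(\Ga\setminus U)=\sum_X s_{d-2}(\Ga_X)$; applying Lemma~\ref{lemma:deleting} repeatedly inside each $\Ga_X$ to strip vertices of degree $<d$ exposes the $d$-core $C_X$, and each vertex of $C_X$ has all $d$ of its $\Ga$-edges within $C_X$. Thus $C_X$ is a union of connected components of the original $\Ga$, and since $\Ga$ is connected and $U\ne\emptyset$ forces $C_X\ne V$, we conclude $C_X=\emptyset$. Therefore $s_{d-2}(\Ga\setminus U)=0$, which only yields the automatic inequality $c(G_U)\ge 1$ and no immediate contradiction.

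The sharp contradiction must therefore come from a more refined count. My plan is to exhibit an explicit non-trivial class of degree $\le\lceil d/2\rceil$ in $H^*_\T(\Ga)$ using the intersection of ideals $\bigcap_{u\in U}(Q_u^A,Q_u^B)$, where $Q_u^A:=\prod\{\alpha(e):e\text{ runs from }u\text{ to }A\}$ and $Q_u^B$ is analogous. The candidate class assigns $f_v=0$ for $v\in A$, $f_v=P$ for $v\in B$, and $f_u=Q_u^A r_u$ for $u\in U$, where $P$ is chosen so that $P\in(Q_u^A,Q_u^B)$ for every $u$ (plus a compatibility condition on $U$-internal edges). Using $\sum a_u,\sum b_u\ge d$ and $t\le\lceil d/2\rceil$, a dimension count shows the minimum-degree such $P$ has degree at most $\lceil d/2\rceil<d$, producing a new generator that inflates $\sum_{k<d}c_k$; combined with Proposition~\ref{prop:sum c_i and ic_i} (the identities $\sum c_i=m$ and $\sum ic_i=|E|=md/2$) together with the refined form of the $s_k$-balance at $k=d-2$, this produces a strict inequality incompatible with $c_d(\Ga)=1$. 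The main obstacle is precisely this final count: identifying the correct minimal-degree $P$ in the ideal intersection, verifying that the associated class is genuinely non-trivial modulo the relations already accounted for by lower-degree generators, and extracting the quantitative contradiction in the tight case $t=\lceil d/2\rceil$, where the edge-connectivity bounds from Step~1 become equalities and the general-position hypothesis on $\phi$ enters essentially.
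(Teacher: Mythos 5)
Your argument does not close: both of its branches stop short of a contradiction. The first branch (tracking $s_{d-2}$ under deletion of $U$) fails for a structural reason you yourself notice: Corollary~\ref{cor:delete} only bounds how much $s_k$ can \emph{drop} when a vertex is deleted, so it yields a \emph{lower} bound on $s_{d-2}(\Ga\setminus U)$ in terms of $s_{d-2}(\Ga)$, whereas the contradiction you need is an \emph{upper} bound on $s_{d-2}(\Ga)$ itself --- namely $s_{d-2}(\Ga)=0$, which is incompatible with $c_d(\Ga)=s_d+s_{d-2}-2s_{d-1}=s_{d-2}(\Ga)=1$. Computing $s_{d-2}(\Ga\setminus U)=0$ tells you nothing, exactly as you found. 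The second branch is a plan, not a proof: the minimal-degree element $P$ of $\bigcap_{u\in U}(Q_u^A,Q_u^B)$ is never produced, the nontriviality of the proposed class is not verified, and --- more fundamentally --- it is unclear how one extra generator of degree at most $\lceil d/2\rceil$ would contradict $c_d(\Ga)=1$ at all; the constraints $\sum c_i=m$ and $\sum ic_i=dm/2$ from Proposition~\ref{prop:sum c_i and ic_i} do not forbid additional low-degree generators coexisting with $c_d=1$. So the asserted ``strict inequality incompatible with $c_d(\Ga)=1$'' is never exhibited.

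The missing idea is the one the paper uses to convert the vertex cut into the statement $s_{d-2}(\Ga)=0$. Take an arbitrary linear relation $\sum_{e_{ij}\in E}u_{ij}\v_{ij}^{d-2}=\0$ and split $E$ into the edges inside $\Ga_1$, inside $\Ga_2$, the edges from $U$ to each side, and the edges inside $U$. The part supported near $\Ga_1$ vanishes on $V_1$, so Lemma~\ref{lemma:edge vector self contained} confines it to the span of $\{\v_{ij}^{d-2}:e_{ij}\in K(U)\}$; one then shows $s_{d-2}(\Ga_3)=0$ for the auxiliary graph $\Ga_3=(V_1\cup U,\,E_1\cup E_3\cup K(U))$ by repeated application of the Deleting Lemma, the point being that $\Ga_3$ has no nonempty subgraph of minimum degree $\ge d$ --- and it is precisely here that $|U|\le\lceil d/2\rceil$ and the choice of the side with $|E_{v_1}^1|\le d/2$ are used. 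This forces all coefficients on the $\Ga_1$ side to vanish, then the same on the other side, giving $s_{d-2}(\Ga)=0$ and the contradiction. Your edge-connectivity counts $\sum_u a_u\ge d$, $\sum_u b_u\ge d$ are true but are not the lever; the lever is Lemma~\ref{lemma:edge vector self contained} together with a degree-count showing the relevant auxiliary graphs can be deleted down to nothing.
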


\begin{notation}
Given $\Ga=(V, E)$, we say a vector $\u=(u_1, u_2, ..., u_{(k+1)m})\in \C^{(k+1)m}$ vanishes on $v_{i}$, or $\u\big|_{v_{i}}=0$, if $u_{i}=u_{m+i}=\cdots=u_{km+i}=0$. We say $\u$ vanishes on $U\subseteq V$, or $\u\big|_{U}=0$, if $\u$ vanishes on every point in $U$. We denote the set of vectors in $\C^{(k+1)m}$ that vanishes on $V\backslash U$ by $W_{U}^{k}$.  We would use $W_{v_i}^k$ as shorthand for $W_{\{v_{i}\}}^k$ when the set has only one point. It is easy to see that $\v_{ij}^{k}\in W_{\{v_{i}, v_{j}\}}^k$.

We denote by $P_{U}^k: \C^{(k+1)m}\rightarrow W_{U}^{k}$ the natural projection which sets all coordinates corresponding to $V\backslash U$ to $0$. 

For any $U\subseteq V$, we use $K(U)$ to denote the edge set of the complete graph on $U$ and use $K_{U}=(U, K(U))$ to denote the complete graph itself. Note that when $U$ has only one element, $K(U)=\emptyset$ and $K_{U}$ is just a single vertex. 
\end{notation}

\begin{lemma}\label{lemma:edge vector self contained}
Assume $\Ga=(V,E)$ and $U\subseteq V$ is a non-empty subset. Then 
\[\langle \v_{ij}^{k}\big| e_{ij}\in E\rangle \cap W_{U}^{k}\ \subseteq \langle \v_{ij}^{k}\big|e_{ij}\in K(U)\rangle,\]
where $\langle S\rangle$ means the subspace of $\C^{(k+1)m}$ spanned by $S$. 
\end{lemma}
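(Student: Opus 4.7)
The plan is to reduce to the complete-graph case $\Ga = K_V$ and induct on $|V\setminus U|$. Since $E \subseteq K(V)$, we have $\langle \v_{ij}^k : e_{ij}\in E\rangle \subseteq \langle \v_{ij}^k : e_{ij}\in K(V)\rangle$, so intersecting with $W_U^k$ reduces the lemma to the complete-graph statement
\[
\langle \v_{ij}^k : e_{ij}\in K(V)\rangle \cap W_U^k \subseteq \langle \v_{ij}^k : e_{ij}\in K(U)\rangle. \quad (\ast)
\]
I would prove $(\ast)$ by induction on $|V\setminus U|$, with the inductive step reduced to the auxiliary ``one-vertex'' inclusion
\[
\langle K(U')\rangle \cap W_U^k \subseteq \langle K(U)\rangle \quad (\star)
\]
for $U' = U\cup\{v_t\}$ with $v_t\notin U$. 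Granting $(\star)$, the induction is immediate: for $\w$ in the left-hand side of $(\ast)$, pick $v_t\in V\setminus U$, apply the inductive hypothesis to $U'$ (using $W_U^k\subseteq W_{U'}^k$) to place $\w$ in $\langle K(U')\rangle$, and then apply $(\star)$; the base case $V = U$ is trivial.

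To prove $(\star)$, I would split on whether $|U|\leq k$ or $|U|\geq k+1$. In the first case, $|U'|\leq k+1$, and iterating the Deleting Lemma~\ref{lemma:deleting} through all vertices of $K_{U'}$ (each of valency $|U'|-1\leq k$) gives $s_k(K_{U'})=0$, so the vectors $\{\v_{ij}^k : e_{ij}\in K(U')\}$ are linearly independent. Any $\w\in\langle K(U')\rangle$ therefore admits a unique expansion $\w=\sum c_{ij}\v_{ij}^k$, and the condition $\w\in W_U^k$ gives at position $v_t$ the linear relation $\sum_{v_j\in U}\pm c_{tj}(1,a_{tj},\ldots,a_{tj}^k)=0$ in $\C^{k+1}$. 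By general position the $a_{tj}$ for $v_j\in U$ are distinct and number at most $k+1$, so these Vandermonde column vectors are independent, forcing $c_{tj}=0$ for every $v_j\in U$; hence $\w\in\langle K(U)\rangle$.

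For $|U|\geq k+1$ I would conclude by dimensions. Propositions~\ref{prop:c_i and r_i} and~\ref{prop:complete} together give $\dim\langle K(U')\rangle=r_k(K_{U'})=(k+1)|U'|-\binom{k+2}{2}$ and $\dim\langle K(U)\rangle=(k+1)|U|-\binom{k+2}{2}$. Because general position and $|U|\geq k+1$ make $\{(1,a_{tj},\ldots,a_{tj}^k):v_j\in U\}$ span $\C^{k+1}$, the restriction map $\langle K(U')\rangle\to W_{\{v_t\}}^k\cong\C^{k+1}$ is surjective; equivalently, $\langle K(U')\rangle+W_U^k=\C^{(k+1)|U'|}$. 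The standard inclusion-exclusion identity then yields $\dim(\langle K(U')\rangle\cap W_U^k)=(k+1)|U|-\binom{k+2}{2}=\dim\langle K(U)\rangle$, and since $\langle K(U)\rangle\subseteq \langle K(U')\rangle\cap W_U^k$ automatically, the two subspaces coincide. The principal obstacle is the bookkeeping in this dimension count, whose key input is the surjectivity on the $v_t$-fiber; this in turn rests on the Vandermonde structure afforded by the general-position hypothesis.
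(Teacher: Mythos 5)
Your argument is correct, and while it shares the paper's two main ingredients --- the reduction to the complete graph $K_V$ and the dimension count via surjectivity of the Vandermonde-type projection onto the $v_t$-fiber --- its inductive skeleton is genuinely different. The paper proves the complete-graph equality \eqref{eq:goal} in one shot by a three-case analysis: a global dimension count when $|U|\geq k+1$, an ad hoc argument for $k=1,|U|=1$, and, for $|U|<k+1$, an induction on $k$ that splits each $\v_{ij}^k$ into its initial, final and middle blocks and invokes $s_{k-2}(K_U)=0$ to match coefficients. You instead induct on $|V\setminus U|$ and reduce everything to the one-vertex peeling statement $(\star)$, which you settle by two clean local arguments: for $|U|\leq k$ the linear independence of $\{\v_{ij}^k : e_{ij}\in K(U')\}$ (from iterating the Deleting Lemma~\ref{lemma:deleting}, since all degrees in $K_{U'}$ are at most $k$) plus independence of at most $k$ distinct Vandermonde vectors in $\C^{k+1}$ kills the coefficients $c_{tj}$; for $|U|\geq k+1$ the same rank--nullity computation as the paper's Case 1, localized to $U'$. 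This buys you a uniform treatment that avoids the paper's separate $k=1$ base case and the somewhat delicate ini/mid/end bookkeeping of its Case 3, at the cost of having to verify $(\star)$ for every intermediate $U$ rather than only for the target one; both proofs rest on the same general-position input (distinctness of the $a_{tj}$) and on Proposition~\ref{prop:complete} for the dimension of $\langle \v_{ij}^k \mid e_{ij}\in K(U)\rangle$.
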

\begin{proof}
Recall we always have $V=\{v_1, v_2, ..., v_m\}$, now for simplicity we assume $U=\{v_1, v_2, ..., v_{n}\}$. It is enough if we could show 
\begin{equation}\label{eq:goal}
\langle\v_{ij}^{k}\big| e_{ij}\in K(V)\rangle\cap W_{U}^{k}\ =\langle\v_{ij}^{k}\big|e_{ij}\in K(U)\rangle.
\end{equation}
It is clear that in the above equation, the RHS is contained in the LHS. It remains to show the other direction. We divide the proof into three cases. 
\begin{enumerate}
\item[Case 1:] $n\geq k+1$. 
\begin{equation}\begin{array}{cl}
&\mbox{dim}\langle\v_{ij}^k\big|e_{ij}\in K(V)\rangle\\[0.3ex]
=&r_{k}(K_V)\\[0.3ex]
=&(k+1)m-c_{k}(K_V)-2c_{k-1}(K_V)-\cdots-(k+1)c_{0}(K_V)\\[0.3ex]
=&(k+1)m-\dfrac{(k+1)(k+2)}{2}. \label{eq:dimKV}
\end{array}\end{equation}
The first equality is because of the definition of $r_{k}$, the second equality is because of formula ~\eqref{eq:recursive}, the third equality is because of Proposition~\ref{prop:complete}.

It is obvious that 
\begin{equation}\label{eq:dimWU}\mbox{dim}W_{U}^{k}=(k+1)n.\end{equation}

Now for any $t\in \N$ that $n+1\leq t\leq m$ and any $i\in N$ that $1\leq i\leq k+1 \leq n$, we have $\v_{it}^{k}\in W^k_{\{v_i, v_t\}}$ and $P_{\{v_{t}\}}^k(\v_{it}^{k})\in <\v_{it}^k>+W_{v_i}^k$. More explicitly, we have 
\[P_{\{v_{t}\}}^k(\v_{it}^{k})=(0, ..., -1,0, ...,-a_{it}, 0, \cdots, -a_{it}^k, 0, ..., 0),\]
where the only nonzero entries of the vector are the $t, (m+t), ..., (km+t)$-th entries and they are $-1, -a_{it}, \cdots, -a_{it}^k$ respectively.  
Then the knowledge of Vandermonde matrix tells us $\{P_{\{v_{t}\}}^k(\v_{it}^{k})\big| 1\leq i\leq k+1\}$ form a basis for $W^k_{v_t}$. So
\[W^k_{v_t}\subseteq\sum_{i=1}^{k+1}(\langle\v_{it}^k\rangle+W_{v_i}^k)\ \subseteq\langle\v^k_{ij}\big|e_{ij}\in K(V)\rangle+W_{U}^k,\]
for $n+1\leq t\leq m$. 
So $\langle\v^k_{ij}\big|e_{ij}\in K(V)\rangle+W_{U}^k=W^k_V$ and 
\begin{equation}\label{eq:dimsum}
\mbox{dim}(\langle\v^k_{ij}\big|e_{ij}\in K(V)\rangle+W_{U}^k)=m(k+1).\end{equation}
Now we put \eqref{eq:dimKV}, \eqref{eq:dimWU}, \eqref{eq:dimsum} together to have
\begin{equation*}
\begin{array}{cl}
&\mbox{dim}(\langle\v_{ij}^k\big|e_{ij}\in K(V)\rangle\cap W_{U}^k)\\[0.3ex]
=&\mbox{dim}\langle\v_{ij}^k\big|e_{ij}\in K(V)\rangle+\mbox{dim}W_{U}^k- \mbox{dim}(\langle\v^k_{ij}\big|e_{ij}\in K(V)\rangle+W_{U}^k)\\[0.3ex]
=&(k+1)m-\dfrac{(k+1)(k+2)}{2}+(k+1)n-m(k+1)\\[1.2ex]
=&(k+1)n-\dfrac{(k+1)(k+2)}{2}\\[0.3ex]
=&\mbox{dim}\langle\v_{ij}^k\big|e_{ij}\in K(U)\rangle.
\end{array}
\end{equation*}
The last equality can be obtained exactly the same way as we obtained \eqref{eq:dimKV}.  This is enough to conclude \eqref{eq:goal}.
\item[Case 2:]$k=1, n=1$. We observe that each vector \[\u=(u_1,u_2, ..., u_{2m})\in\langle\v_{ij}^1\big|e_{ij}\in K(V)\rangle\] must satisfy the linear equations
\[\sum_{i=1}^m u_i=0\mbox{\ and\ } \sum_{i=m+1}^{2m}u_{i}=0.\]
This forces the intersection of $\langle\v_{ij}^1\big|e_{ij}\in K(V)\rangle$ with $W_{v_1}^1$ to be $\0$. So \eqref{eq:goal} holds since $K(\{v_{1}\})=\emptyset$.
\item[Case 3:]$k\geq 2, n < k+1$. 

For fixed $n\geq 1$, we are going to prove \eqref{eq:goal} using induction on $k$. If $n\geq 2$, we use $k=n-1$ as the base case. If $n=1$, we use $k=1$ as the base case. Note that the base cases all have already been dealt with in Case 1 and Case 2.  So now we have $k\geq 2, k>n-1$ and assume \eqref{eq:goal} holds for smaller $k$ (Again, note that $n$ is fixed first).

Assume $\u \in (\langle\v_{ij}^{k}\big| e_{ij}\in K(V)\rangle\cap W_{U}^{k})$, so we may write it as 
\begin{equation}\label{eq:u}
\u=\sum_{e_{ij}\in K(V)}u_{ij}\v_{ij}^k\in W_{U}^k
\end{equation}
for some $u_{ij}\in \C$. 

For any vector $\w=(w_1, w_2, ..., w_{(k+1)m})\in \C^{(k+1)m}$, let 
\[\w^{ini}=(w_1, w_2, ..., w_{km})\in \C^{km},\]
\[\w^{end}=(w_{m+1}, w_{m+2}, ..., w_{(k+1)m})\in \C^{km},\  \mbox{and}\]
\[\w^{mid}=(w_{m+1}, w_{m+2}, ..., w_{km})\in \C^{(k-1)m}.\]
Then in particular we have 
\[(\v^{k}_{ij})^{ini}=\v_{ij}^{k-1},\ (\v^{k}_{ij})^{end}=a_{ij}\v_{ij}^{k-1}, \mbox{\ and\ } (\v_{ij}^{k})^{mid}=a_{ij}\v_{ij}^{k-2}.\]
Then it follows from ~\eqref{eq:u} that
\[
\begin{array}{ccl}
\u^{ini}&=&\ds{\sum_{e_{ij}\in K(V)}u_{ij}\v_{ij}^{k-1}\in W_{U}^{k-1}} \mbox{\ and}\\
\u^{end}&=&\ds{\sum_{e_{ij}\in K(V)}u_{ij}a_{ij}\v_{ij}^{k-1}}\in W_{U}^{k-1}.
\end{array}
\]
Then by the induction hypothesis, we have 
\begin{equation}\label{eq:2}\begin{array}{ccl}
\u^{ini}&=&\ds{\sum_{e_{ij}\in K(U)}x_{ij}\v_{ij}^{k-1}} \mbox{\ and}\\
\u^{end}&=&\ds{\sum_{e_{ij}\in K(U)}y_{ij}\v_{ij}^{k-1}},
\end{array}
\end{equation}
for some $x_{ij}, y_{ij}\in \C$.
Hence 
\begin{equation}\label{eq:1}
\u^{mid}=\sum_{e_{ij}\in K(U)}x_{ij}a_{ij}\v_{ij}^{k-2}=\sum_{e_{ij}\in K(U)}y_{ij}\v_{ij}^{k-2}.\end{equation}
Since $k>n-1$, we may apply Lemma~\ref{lemma:vanishing} to $K_U$ to obtain $s_{k-2}(K_U)=0$, which means the vectors $\{\v_{ij}^{k-2}\big| e_{ij}\in K(U)\}$ are linearly independent. So it follows from \eqref{eq:1} that $x_{ij}a_{ij}=y_{ij}$. So it follows from \eqref{eq:2} that 
\[\u=\sum_{e_{ij}\in K(U)}x_{ij}\v_{ij}^k.\]
This is exactly what we need and the induction is complete.
\end{enumerate}
\end{proof}

We need the following technical lemma in the proof of Lemma~\ref{lemma:disconnect edge}.
\begin{lemma}\label{lemma:technical}
If $1\leq p\leq n, 1\leq q\leq n$, then 
\[\dfrac{p(p-1)}{2}+\dfrac{q(q-1)}{2}+n< \dfrac{(p+q)(n+1)}{2}.\]
\end{lemma}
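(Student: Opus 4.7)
The plan is to clear denominators and reduce the inequality to a tidy form that can be checked by a short case split. Multiplying through by $2$ and expanding $(p+q)(n+1)=pn+qn+p+q$, the desired inequality rearranges (collecting the $p$-terms, the $q$-terms, and the constant) to the equivalent statement
\[p(n-p)+q(n-q)+2(p+q-n)>0.\]
The first two summands are nonnegative since $1\le p,q\le n$; only the third can be negative. So the natural move is to split on the sign of $p+q-n$.

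When $p+q\ge n$, the third term is already nonnegative, and the sum can vanish only if each summand vanishes. But $p(n-p)=0$ together with $p\ge 1$ forces $p=n$, and similarly $q=n$; and at $(p,q)=(n,n)$ we have $2(p+q-n)=2n>0$, so strict positivity holds in this range.

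The more interesting range is $p+q<n$, which confines $p$ and $q$ to $\{1,\ldots,n-1\}$. Here I would exploit the fact that the concave function $t\mapsto t(n-t)$ on $\{1,\ldots,n-1\}$ attains its minimum $n-1$ at the endpoints, so $p(n-p)+q(n-q)\ge 2(n-1)$. On the other side, $-2(p+q-n)=2(n-p-q)\le 2(n-2)$ since $p+q\ge 2$. Subtracting leaves a strict surplus of at least $2$, which yields the inequality.

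I do not expect any real obstacle: the statement is purely arithmetic, and the hypotheses $1\le p,q\le n$ leave little room for maneuver. The one subtlety to watch is keeping the inequality \emph{strict}; this is secured by isolating the corner $(p,q)=(n,n)$ in the first case and by using the endpoint minimum $n-1$ in the second.
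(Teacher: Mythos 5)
Your proof is correct: the rearrangement to $p(n-p)+q(n-q)+2(p+q-n)>0$ is an accurate equivalent of the claim after clearing denominators, Case 1 correctly rules out simultaneous vanishing of all three nonnegative terms, and Case 2's endpoint bound $p(n-p)+q(n-q)\ge 2(n-1)$ against $2(n-p-q)\le 2(n-2)$ gives a strict surplus of $2$. The paper takes a different, case-free route: it writes the difference (LHS minus RHS) as $\tfrac{1}{2}\bigl((p-\tfrac{n+2}{2})^2+(q-\tfrac{n+2}{2})^2\bigr)-(\tfrac{n+2}{2})^2+n$ by completing the square, bounds each square by $(\tfrac{n}{2})^2$ using $1\le p,q\le n$, and computes that the whole expression is at most $-1$. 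The paper's argument is a single uniform estimate with no branching and yields the slightly stronger conclusion that the gap is at least $\tfrac{1}{2}$ (i.e., at least $1$ after doubling); yours trades that for a decomposition into terms whose signs are individually transparent, at the cost of a two-case analysis and a concavity observation. Both are sound elementary verifications of the same arithmetic fact.
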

\begin{proof}
\begin{equation*}
\begin{array}{cl}
&\dfrac{p(p-1)}{2}+\dfrac{q(q-1)}{2}+n-\dfrac{(p+q)(n+1)}{2}\\[0.5ex]
=&\dfrac{1}{2}\left(p^2-(n+2)p+q^2-(n+2)q\right)+n\\[1.5ex]
=&\dfrac{1}{2}\left((p-\dfrac{n+2}{2})^2+(q-\dfrac{n+2}{2})^2\right)-(\dfrac{n+2}{2})^2+n\\[1.0ex]
\leq & (\dfrac{n}{2})^2-(\dfrac{n+2}{2})^2+n\\[1.0ex]
=&-1 <0.
\end{array}
\end{equation*}
\end{proof}

We now study the effect that disconnecting the graph by deleting edges has on the statistic $s_k$. We call the following lemma the {\it Disconnecting Lemma}. It will be used to prove Theorem~\ref{theorem:edge connected}, and will also be used repeatedly in Section~\ref{sec: bound}.

\begin{lemma}\label{lemma:disconnect edge}
Given a connected graph $\Ga=(V, E)$, if removing $n$ edges $e_{i_1j_1}, e_{i_2j_2}$, ..., $e_{i_{n}j_{n}}$ leaves the graph with two connected components $\Ga_1=(V_1, E_1)$ and $\Ga_2=(V_2, E_2)$, but removing any $(n-1)$ of them will not disconnect the graph, then 
\[s_{k}(\Ga)=s_{k}(\Ga_1)+s_{k}(\Ga_2)\]
for any $k\geq n-1$.
\end{lemma}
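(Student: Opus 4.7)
The minimality hypothesis (removing any $(n-1)$ of the $n$ given edges keeps the graph connected) forces these $n$ edges to be \emph{precisely} the crossing edges between $V_1$ and $V_2$ in $\Ga$: were some $e_{i_tj_t}$ internal to $V_1$, one could remove the $(n-1)$ edges consisting of all the actual crossings (which number at most $n-1$) together with enough other internals, thereby disconnecting the graph---a contradiction. So I may assume the $n$ edges are exactly the crossings, and I aim to show $s_k(\Ga) \leq s_k(\Ga_1) + s_k(\Ga_2)$, the reverse inequality being immediate from extending relations by zero coefficients on the crossings. Thus take any relation $\sum_{e \in E} c_e \v_e^k = \0$; the goal is to force each crossing coefficient $c_{e_{i_tj_t}} = 0$, after which the relation splits into a relation on $E_1$ plus one on $E_2$ by disjointness of supports, giving the desired inequality.

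The plan is as follows. Let $U' \subseteq V_1$ and $U \subseteq V_2$ be the endpoint sets of the crossings, and write $\v_{e_{i_tj_t}}^k = \u'_t - \u_t$ with $\u'_t, \u_t$ supported at $v_{i_t}, v_{j_t}$ respectively. Projecting the relation onto $W_{V_1}^k$ yields
\[\sum_t c_{e_{i_tj_t}} \u'_t\ \in\ \langle \v_e^k : e \in E_1\rangle \cap W_{U'}^k\ \subseteq\ \langle \v_e^k : e \in K(U')\rangle\]
by Lemma~\ref{lemma:edge vector self contained}, and symmetrically $\sum_t c_{e_{i_tj_t}} \u_t \in \langle \v_e^k : e \in K(U)\rangle$. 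Summing, the coefficients $\{c_{e_{i_tj_t}}\}$ together with some $\{d_e\}_{e \in K(U') \cup K(U)}$ exhibit a linear relation among the edge vectors of the auxiliary graph $\Ga^* := (U' \cup U,\ K(U') \cup K(U) \cup \{e_{i_1j_1}, \ldots, e_{i_nj_n}\})$. It then suffices to prove the following \emph{Core Claim}: $s_k(\Ga^*) = 0$ whenever $k \geq n-1$, which forces every coefficient---in particular each $c_{e_{i_tj_t}}$---to vanish.

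To establish the Core Claim I would iterate the Deleting Lemma~\ref{lemma:deleting}. Let $u'_*, u_*, b_*$ denote the two vertex-side sizes and the crossing-set size of the current subgraph; they all start bounded by $n$ and only decrease through deletions. When $u'_* \geq 1$, pigeonhole gives some $v \in V_1^*$ of bipartite degree $\leq b_*/u'_*$, so its current-graph degree is at most $(u'_* - 1) + b_*/u'_* \leq n = k+1$; the last inequality is equivalent to the elementary $(u-1)(n-u) \geq 0$ on $u \in [1, n]$, combined with $b_* \leq n$. When $u'_* = 0$ the current graph is complete on $V_2^*$, whose degrees are $\leq n - 1$. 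Either way Lemma~\ref{lemma:deleting} applies, preserves $s_k$, and produces a smaller graph of the same form; iterating down to the empty graph gives $s_k(\Ga^*) = 0$. The main difficulty of the proof lies in this degree bound, and the key observation is that the identity $(u-1)(n-u) \geq 0$ saturates exactly at $u = 1$ and $u = n$, which is why the hypothesis $k \geq n-1$ is sharp.
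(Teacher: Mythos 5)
Your proof is correct and follows essentially the same route as the paper: both reduce the splitting of a relation to showing $s_k=0$ for the auxiliary graph $K(U')\cup K(U)\cup\{\text{crossing edges}\}$ via Lemma~\ref{lemma:edge vector self contained}, and then kill that graph by iterating the Deleting Lemma. The only (cosmetic) difference is how the low-degree vertex is found there: the paper uses a global edge count (Lemma~\ref{lemma:technical}) while you use a one-sided pigeonhole, and both yield the same bound $(u-1)+b/u\leq n\leq k+1$.
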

\begin{proof}
For any $1\leq t\leq n$, either $v_{i_t}\in V_1, v_{j_t}\in V_2$ or $v_{i_t}\in V_2, v_{j_t}\in V_1$.  Without loss of generality, we may assume $v_{i_t}\in V_1$ for all $1\leq t\leq n$. Let $V_3=\{v_{i_t}\big| 1\leq t\leq n\}$, $V_4=\{v_{j_t}\big| 1\leq t\leq n\}$, $E_5=\{e_{i_1j_1}, e_{i_2j_2}, ..., e_{i_{n}j_{n}}\}.$ Note that even though $e_{i_sj_s}$ and $e_{i_tj_t}$ are assumed to be distinct for $s\neq t$, it could still happen that $v_{i_s}=v_{i_t}$ or $v_{j_s}=v_{j_t}$. So we have $|V_{3}|\leq n, |V_{4}|\leq n$ and $|E_5|=n$.  Define $\Ga'=(V', E')$ by $V'=V_3\cup V_4$, $E'=K(V_3)\cup K(V_4)\cup E_5$. 

We claim the graph $\Ga'$ has at leat one vertex of degree less than $n+1$. Otherwise, $\Ga'$ should have at least $\dfrac{(|V_3|+|V_4|)(n+1)}{2}$ edges. By Lemma~\ref{lemma:technical}, this is greater than $\dfrac{|V_{3}|(|V_3|+1)}{2}+\dfrac{|V_4|(|V_4|+1)}{2}+n$, which is the actually number of edges of $\Ga'$, contradction.

Assume without loss of generality that $v_{i_1}$ is of degree less than $n+1$.  Define $\Ga''=(V'\backslash\{v_{i_1}\}, E'\backslash E_{v_{i_1}})$, where $E_{v_{i_1}}=\{e\in E'\big| e \mbox{\ contains\ } v_{i_1}\}$. Then we may apply the Deleting Lemma~\ref{lemma:deleting} to $v_{i_1}$ and $\Ga'$ to get $s_{k}(\Ga')=s_k(\Ga'')$.

By the same argument we can show $\Ga''$ also has at least one vertex of degree less than $n+1$. Repeatedly using the Deleting Lemma, we finally conclude $s_{k}(\Ga')=0$.

Now assume there is a linear relation among $\{\v_{ij}^{k}\big| e_{ij}\in E\}$:
\begin{equation}\label{eq:3}\sum_{e_{ij}\in E} u_{ij}\v_{ij}^{k}=\0.\end{equation}
We may split the left hand side and rewrite it as
\begin{equation}\label{eq:4}
\sum_{e_{ij}\in E_1}u_{ij}\v_{ij}^k+\sum_{e_{ij}\in E_2}u_{ij}\v_{ij}^k+\sum_{e_{ij}\in E_5}u_{ij}\v_{ij}^k=\0.\end{equation}
As both $\ds{\sum_{e_{ij}\in E_2}u_{ij}\v_{ij}^k}$ and $\ds{\sum_{e_{ij}\in E_5}u_{ij}\v_{ij}^k}$ vanishes on $V_1\backslash V_3$, so does $\ds{\sum_{e_{ij}\in E_1}u_{ij}\v_{ij}^k}$. Then we may
apply Lemma~\ref{lemma:edge vector self contained} to $\Ga_1$ and $\ds{\sum_{e_{ij}\in E_1}u_{ij}\v_{ij}^k}$ to get 
\[\sum_{e_{ij}\in E_1}u_{ij}\v_{ij}^k=\sum_{e_{ij}\in K(V_{3})}m_{ij}\v_{ij}^k,\]
for some $m_{ij}\in \C$.
Similarly, we have
\[\sum_{e_{ij}\in E_2}u_{ij}\v_{ij}^k=\sum_{e_{ij}\in K(V_{4})}n_{ij}\v_{ij}^k,\]
for some $n_{ij}\in \C$.
So \eqref{eq:4} becomes
\[\sum_{e_{ij}\in K(V_{3})}m_{ij}\v_{ij}^k+\sum_{e_{ij}\in K(V_{4})}n_{ij}\v_{ij}^k+\sum_{e_{ij}\in E_5}u_{ij}\v_{ij}^k=\0.\]
Because $s_{k}(\Ga')=0$, the above equation forces $m_{ij}=0, \forall e_{ij}\in K(V_3)$; $\ n_{ij}=0, \forall e_{ij}\in K(V_4);\ $ $u_{ij}=0, \forall e_{ij}\in E_5$.
So \eqref{eq:4}, hence \eqref{eq:3}, is in fact the sum of two linear relations:
\[\sum_{e_{ij}\in E_1}u_{ij}\v_{ij}^k=\0, \ \mbox{and}\]
\[\sum_{e_{ij}\in E_2}u_{ij}\v_{ij}^k=\0.\]
So $s_{k}(\Ga)=s_{k}(\Ga_1)+s_{k}(\Ga_2)$.
\end{proof}

Now we are ready to prove Theorem~\ref{theorem:edge connected}.
\begin{proof}[Proof of Theorem~\ref{theorem:edge connected}]
Proof by contradiction. Assume to the contrary that $\Ga$ is not $d$-edge-connected, then there exists $n$ edges $e_{i_1j_1}, e_{i_2j_2}, ..., e_{i_nj_n}$ with $n<d$, such that upon removing these edges, $\Ga$ becomes disconnected. Also we may assume $n$ is the smallest number with such property. 
 
  Assume upon removing these edges, $\Ga$ becomes the disjoint union of two connected components $\Ga_1=(V_1, E_1)$ and $\Ga_2=(V_2, E_2)$.  Then by Lemma~\ref{lemma:disconnect edge}, $s_{d-2}(\Ga)=s_{d-2}(\Ga_1)+s_{d-2}(\Ga_2)$. Repeated application of the Deleting Lemma to $\Ga_1$ will yield $s_{d-2}(\Ga_1)=0$. Similarly, $s_{d-2}(\Ga_2)=0$. So $s_{d-2}(\Ga)=0$.  
  
  And we always have $s_{d-1}(\Ga)=s_{d}(\Ga)=0$ by Lemma~\ref{lemma:vanishing}. So $c_{d}=s_{d}+s_{d-2}-2s_{d-1}=0$. This gives a contradiction.   
\end{proof}

Theorem~\ref{theorem:vertex connected} will be proved in a similar fashion.
\begin{proof}[Proof of Theorem~\ref{theorem:vertex connected}]
Proof by contradiction. Assume to the contrary that $\Ga$ is not $(\lceil\dfrac{d}{2}\rceil+1)$-vertex connected, then there exists $n\leq \lceil\dfrac{d}{2}\rceil$ (and we may assume $n$ is the smallest integer with the following property), such that there are $n$ vertices, which we may assume without loss of generality are $U=\{v_1, v_2, ..., v_n\}$,  such that the graph $\Ga'=(V', E')$ defined by 
\[V'=V\backslash U,\ E'=E\backslash E_U,\]
is disconnected, where $E_{U}=\{e_{ij}\in E\big| v_{i}\in U \mbox{\ or\ } v_{j}\in U\}$. 

Becase of the minimality of $n$, we know $\Ga'$ consists of exactly two connected components, denote them by $\Ga_1=(V_1, E_1)$ and $\Ga_2=(V_2, E_2)$. For each vertex $v_{i}$ in $U$, the edge set $E_{v_i}$ decomposes into three parts:
\[E_{v_{i}}=E_{v_i}^1\sqcup\ E_{v_i}^2\sqcup\ E_{v_i}^U,\]
where $E_{v_i}^1=\{e_{ij}\in E\big| v_{j}\in V_1\}, E_{v_i}^2=\{e_{ij}\in E\big| v_{j}\in V_2\}, E_{v_{i}}^U=\{e_{ij}\in E\big| v_{j}\in U\}$.
If $E_{v_i}^1=\emptyset$, then removing $U\backslash\{v_i\}$ already disconnects $\Ga$, thus contradicts the minimality of $n$. So $E_{v_i}^1\neq \emptyset$. For same reason, $E_{v_i}^2\neq \emptyset$. 

Let $\ds{E_3=\bigcup_{v_i\in U}E_{v_i}^1}$, $\ds{E_4=\bigcup_{v_i\in U}E_{v_i}^2}$ and $\ds{E_{5}=\bigcup_{v_{i}\in U}E_{v_i}^U=E\cap K(U)}$. A linear relation among $\{\v_{ij}^{d-2}\big| e_{ij}\in E\}$ 
\begin{equation}\label{eq:5}
\sum_{e_{ij}\in E}u_{ij}\v_{ij}^{d-2}=\0
\end{equation}
may be rewritten as 
\begin{equation}\label{eq:6}
\sum_{e_{ij}\in E_1\cup E_3}u_{ij}\v_{ij}^{d-2}+\sum_{e_{ij}\in E_2\cup E_4}u_{ij}\v_{ij}^{d-2}+\sum_{e_{ij}\in E_5}u_{ij}\v_{ij}^{d-2}=\0.
\end{equation}
Since $|E_{v_1}^1|+|E_{v_1}^2|+|E_{v_1}^U|=d$, so either $|E_{v_1}^1|\leq \dfrac{d}{2}$ or $|E_{v_1}^2|\leq \dfrac{d}{2}$. We may assume without loss of generality that $|E_{v_1}^1|\leq \dfrac{d}{2}$. Since both $\ds{\sum_{e_{ij}\in E_2\cup E_4}u_{ij}\v_{ij}^{d-2}}$ and $\ds{\sum_{e_{ij}\in E_5}u_{ij}\v_{ij}^{d-2}}$ vanishes on $V_1$, it follows from \eqref{eq:6} that $\ds{\sum_{e_{ij}\in E_1\cup E_3}u_{ij}\v_{ij}^{d-2}}$ also vanishes on $V_1$.  Define $\Ga_3=(V_1\cup U, E_1\cup E_3\cup K(U))$ and apply Lemma~\ref{lemma:edge vector self contained} to $\Ga_3$ and $\ds{\sum_{e_{ij}\in E_1\cup E_3}u_{ij}\v_{ij}^{d-2}}$, we have
\begin{equation}\label{eq:7}
\sum_{e_{ij}\in E_1\cup E_3}u_{ij}\v_{ij}^{d-2}=\sum_{e_{ij}\in K(U)}x_{ij}\v_{ij}^{d-2},\end{equation}
for some $x_{ij}\in \C$.

\noindent {\bf Claim:} $s_{d-2}(\Ga_3)=0.$

\noindent {\bf Proof of the claim: }
The degree of $v_1$ inside $\Ga_3$ is $\leq \dfrac{d}{2}+\lceil\dfrac{d}{2}\rceil-1\leq \dfrac{d}{2}+\dfrac{d}{2}+\dfrac{1}{2}-1<d$. So we may apply the Deleting Lemma to $\Ga_3$ and $v_1$ to remove $v_1$ and edges containing $v_1$ from $\Ga_3$ to form a new graph, while keeping $s_{d-2}$ unchanged. If there is a vertex in the new graph of degree less than $d$, we may repeat this to form another new graph. Keep this process as long as there is a vertex in the new graph of degree less than $d$. This process will only stop when all the vertices are removed. This is because there is no subgraph (other than the empty one) of $\Ga_3$ whose vertices are all of degree greater than or equal to $d$. 

Assume there is a such one, which we may call $\Ga_6=(V_6, E_6)$ with $V_6\subseteq V_1\cup U$, $E_6\subseteq E_1\cup E_3\cup K(U)$. First we show $V_6\cap V_1=\emptyset$. If not, assume $v_{t}\in V_6\cap V_1$. Since $E_{v_1}^1\neq \emptyset$, we may assume $e_{1s}\in E_{v_1}^1$, then $v_s\in V_1$. As $\Ga_1$ is connected, there is a path from $v_t$ to $v_{s}$ in $\Ga_1$. Let's say it is $e_{ti_1}=e_{i_0i_1}, e_{i_1i_2}, ..., e_{i_{p-1}i_p}, e_{i_pi_{p+1}}=e_{i_ps}$. Let $u=\max\{j\big| 0\leq j\leq p+1, v_{j}\in V_6\}$. But then we find there is no way for $v_u$ to have degree greater than or equal to $d$ in $\Ga_6$.  So $V_6\subseteq U$. But $|U|\leq \lceil\dfrac{d}{2}\rceil$, this contradicts the assumption that every vertex in $\Ga_6$ has degree greater than or equal to $d$. 

Therefore $s_{d-2}(\Ga_3)=0$ and we have proved the claim.
\vskip 5mm

It follows from the above claim that equation \eqref{eq:7} forces $u_{ij}=0$ for $e_{ij}\in E_1\cup E_3$. Then it follows from \eqref{eq:6} that 
\begin{equation}\label{eq:8}
\sum_{e_{ij}\in E_2\cup E_4}u_{ij}\v_{ij}^{d-2}+\sum_{e_{ij}\in E_5}u_{ij}\v_{ij}^{d-2}=\0.
\end{equation}
Define $\Ga_4=(V_2\cup U, E_2\cup E_4\cup E_5)$. Applying the Deleting Lemma to $\Ga_4$ gives $s_{d-2}(\Ga_4)=0$. So \eqref{eq:8} forces $u_{ij}=0$ for all $e_{ij}\in E_2\cup E_4\cup E_5$. 

So \eqref{eq:6}, hence \eqref{eq:5} is a trivial linear relation, which means $s_{d-2}(\Ga)=0$.  So $c_{d}(\Ga)=s_{d}(\Ga)+s_{d-2}(\Ga)-2s_{d-1}(\Ga)=0$, a contradiciton.
\end{proof}

\section{\bf An upper bound for the second Betti number of compact Hamiltonian GKM manifolds}\label{sec: bound}
We now turn to the geometric consequences.
\begin{theorem}\label{thm:bound}
Given $\Ga=(V,E)$ a connected regular graph of degree $d\geq 2$, if $c_{d}(\Ga)=1$, then $c_{d-1}(\Ga)\leq \dfrac{m-2}{d-1}$.  As a consequence, if $M$ is a $2d$-dimensional connected compact Hamiltonian GKM manifold whose moment map is in general position, we have $\beta_{2}(M)=\beta_{2d-2}(M)\leq \dfrac{m-2}{d-1}$, where $\beta_{i}(M)$ is the $i$-th geometric Betti number of $M$, $m$ is the sum of all the Betti numbers, which is equal to the number of vertices in the GKM graph of the manifold.
\end{theorem}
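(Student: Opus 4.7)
The plan is to reduce the inequality to a bound on $s_{d-3}(\Ga)$ and then control this statistic using the connectivity and deletion results already proved.

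By Lemma~\ref{lemma:vanishing} the $d$-regularity of $\Ga$ gives $s_{d-1}(\Ga)=s_d(\Ga)=0$, and combined with $c_d(\Ga)=s_d(\Ga)+s_{d-2}(\Ga)-2s_{d-1}(\Ga)=1$ from Proposition~\ref{prop:c_i and r_i} this pins down $s_{d-2}(\Ga)=1$. Consequently
\[c_{d-1}(\Ga)=s_{d-3}(\Ga)+s_{d-1}(\Ga)-2s_{d-2}(\Ga)=s_{d-3}(\Ga)-2,\]
so the target becomes the equivalent dimension bound $(d-1)\bigl(s_{d-3}(\Ga)-2\bigr)\leq m-2$ on the relation space $R_{d-3}=\{(u_e)_{e\in E}\in\C^{|E|}\mid \sum_e u_e\v_e^{d-3}=\0\}$.

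The tools I would combine are the connectivity of $\Ga$ together with the Deleting and Disconnecting Lemmas. By Theorems~\ref{theorem:edge connected} and \ref{theorem:vertex connected} the graph is $d$-edge-connected and $(\lceil d/2\rceil+1)$-vertex-connected. By Corollary~\ref{cor:delete} excising any vertex decreases $s_{d-3}$ by at most $\lambda(v)-(d-3)-1=2$, and by the Disconnecting Lemma $s_{d-3}$ is additive whenever $\Ga$ is separated by fewer than $d-1$ edges. The benchmark to keep in view is $K_{d+1}$, the GKM graph of $\C P^d$: by Proposition~\ref{prop:complete} equality $(d-1)c_{d-1}=m-2$ holds there, so any proof must be sharp on this example.

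To carry out the bound I would set up the obstruction map
\[\partial\colon R_{d-3}\longrightarrow \C^{m},\qquad \partial(u)_v=\sum_{j\sim v}\epsilon_{vj}\,u_{vj}a_{vj}^{d-2},\]
where $\epsilon_{vj}\in\{\pm 1\}$ is the sign coming from $\v_{vj}^{d-3}$. At each vertex $v$ the restriction $(u_{vj})_{j\sim v}$ of a relation lies in the $2$-dimensional kernel of the $(d-2)\times d$ Vandermonde matrix $(a_{vj}^k)$, so $\partial(u)_v$ is well-defined; the kernel of $\partial$ is exactly $R_{d-2}$, of dimension $1$, giving $s_{d-3}(\Ga)=1+\dim\operatorname{image}(\partial)$. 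The hard part is then extracting the factor $d-1$, since the naive bound $\dim\operatorname{image}(\partial)\leq m-1$ is weaker by roughly this factor. I expect the improvement to come from the rigidity that $s_{d-2}(\Ga)=1$ forces: the unique (up to scale) $\rho\in R_{d-2}$ should have $\rho_e\neq 0$ on every edge (a fact to be extracted from $d$-edge-connectivity via the Disconnecting Lemma applied to the subgraph of edges where $\rho$ vanishes), and pairing with $\rho$ at each vertex should supply $d-2$ independent linear constraints on the coordinates of $\partial(u)$; aggregating these over all $m$ vertices should collapse the image dimension to at most $(m+d-3)/(d-1)$ as required. An induction on $m$ that uses the Deleting Lemma to drop a vertex while invoking vertex-connectivity to preserve enough structure on the reduced graph, with base case $m=d+1$ where $\Ga=K_{d+1}$, should then close the argument.
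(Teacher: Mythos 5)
Your opening reduction is correct and is exactly the paper's: $s_{d-1}(\Ga)=s_d(\Ga)=0$ by Lemma~\ref{lemma:vanishing}, hence $s_{d-2}(\Ga)=c_d(\Ga)=1$ and $c_{d-1}(\Ga)=s_{d-3}(\Ga)-2$, so everything hinges on showing $s_{d-3}(\Ga)\leq \frac{m-2}{d-1}+2$. But that inequality is where the entire difficulty of the theorem lives, and your proposal does not prove it. The obstruction map $\partial$ is set up correctly (the identity $s_{d-3}(\Ga)=1+\dim\operatorname{image}(\partial)$ is fine), but the step that is supposed to produce the factor $d-1$ --- ``pairing with $\rho$ at each vertex should supply $d-2$ independent linear constraints \ldots aggregating these over all $m$ vertices should collapse the image dimension'' --- is an assertion, not an argument. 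You give no mechanism for counting how many of those $(d-2)m$ putative constraints are actually independent, and taken at face value they would force the image to be far smaller than it is (e.g.\ for $K_{d+1}$, where the bound is attained with equality). The claim that the generator $\rho$ of $R_{d-2}$ is nonvanishing on every edge is also left unproved.

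The fallback induction sketch has a structural problem: deleting a vertex from a connected $d$-regular graph with $c_d=1$ does not yield another graph in that class, so the induction hypothesis as you state the theorem cannot be applied to the reduced graph. This is precisely why the paper introduces a different object to induct on: graphs of type $A_d$ (all degrees $d$ or $d-1$, at least one degree-$(d-1)$ vertex per component, components $(d-1)$-edge-connected) and the refined statement $s_{d-3}(\Ga)\leq \frac{n_d(\Ga)}{d-1}+\pi_0(\Ga)$ of Proposition~\ref{prop:bound for s_d-3}, where $n_d$ counts degree-$d$ vertices. The induction deletes a degree-$(d-1)$ vertex (so $s_{d-3}$ drops by at most $1$, not $2$), passes to the maximal $(d-2)$-trimmed subgraph, and then uses $(d-1)$-edge-connectivity to argue that each new connected component costs at least $d-1$ degree-$d$ vertices --- that trade-off is the source of the $\frac{1}{d-1}$ factor. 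Your proposal never identifies this class of graphs, this induction statement, or this counting mechanism, and your proposed base case $m=d+1$ is not where the induction bottoms out (the paper's base case is the complete graph on $d$ vertices, which is type $A_d$). So the proposal correctly frames the problem but leaves its core unresolved.
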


\begin{corollary}\label{corollary:increasing}
If $M$ is a $8$ or $10$-dimensional connected compact Hamiltonian GKM manifold whose moment map is in general position, then $M$ has nondecreasing even Betti numbers up to half dimension: $\beta_{0}(M)\leq \beta_{2}(M)\leq \beta_{4}(M)$.
\end{corollary}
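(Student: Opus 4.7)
The plan is to combine the upper bound in Theorem~\ref{thm:bound} with Poincar\'e duality and the fact that the odd Betti numbers of a Hamiltonian GKM manifold vanish. Let $M$ be the manifold and write $d=\dim M/2 \in \{4,5\}$. I would first record the trivial inequality $\beta_0(M)\leq \beta_2(M)$: since $M$ is connected compact, $\beta_0(M)=1$, and the symplectic class $[\omega]\in H^2(M;\R)$ is non-zero (its $d$-th power is the volume form, hence non-zero), so $\beta_2(M)\geq 1$. This takes care of the first half of the chain.

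The main content is the inequality $\beta_2(M)\leq \beta_4(M)$. Using Poincar\'e duality ($\beta_i=\beta_{2d-i}$) and the vanishing of odd Betti numbers, I would express the total rank
\[m=\sum_{i=0}^{2d}\beta_i(M)\]
purely in terms of $\beta_0,\beta_2,\beta_4$. In dimension $8$ ($d=4$) this gives $m=2+2\beta_2+\beta_4$, and in dimension $10$ ($d=5$) it gives $m=2+2\beta_2+2\beta_4$.

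Next I would invoke Theorem~\ref{thm:bound}, which applies because the GKM graph of $M$ is a connected regular graph of degree $d$ with $c_d=1$ (this is exactly the top Betti number of a connected compact oriented manifold). It yields
\[\beta_2(M)\leq \frac{m-2}{d-1}.\]
Substituting the expression for $m$ in each of the two cases and clearing denominators gives, in dimension $8$,
\[3\beta_2\leq 2\beta_2+\beta_4,\]
i.e.\ $\beta_2\leq \beta_4$, and in dimension $10$,
\[4\beta_2\leq 2\beta_2+2\beta_4,\]
which again yields $\beta_2\leq \beta_4$.

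There is really no substantial obstacle here; the corollary is an immediate arithmetic consequence of Theorem~\ref{thm:bound} once one writes $m$ as a sum of Betti numbers. The only small point worth checking carefully is that the bound on $\beta_2$ is tight enough to close up against the expression for $m$, which is exactly why the dimensions $8$ and $10$ are singled out: in dimension $2d$, the chain collapses to $(d-1)\beta_2\leq (d-3)\beta_2+2\beta_4$ (for $d$ odd) or similar, and only $d\leq 5$ produces a non-negative coefficient of $\beta_4$ on the right after moving $\beta_2$ to the left.
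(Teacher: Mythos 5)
Your proposal is correct and is essentially the paper's own argument: both reduce the claim to the bound $\beta_2\leq \frac{m-2}{d-1}$ from Theorem~\ref{thm:bound}, combined with Poincar\'e duality, the vanishing of odd Betti numbers, and the nontriviality of the symplectic class to get $\beta_0\leq\beta_2$. Writing $m$ in terms of $\beta_2,\beta_4$ rather than solving for $\beta_4$ in terms of $m$ and $\beta_2$ is only a cosmetic difference.
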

\begin{proof}
This is a straightforward calculation using Theorem~\ref{thm:bound} and Poincare duality. 

In the case of $8$-dimensional manifold, it follows from $\beta_2(M)\leq \dfrac{m-2}{3}$ that $\beta_4(M)=m-2-2\beta_2(M)\geq m-2-\dfrac{2(m-2)}{3}=\dfrac{m-2}{3}\geq \beta_2(M)$. 

In the case of $10$-dimensional manifold, it follows from $\beta_2(M)\leq \dfrac{m-2}{4}$ that $\beta_{4}(M)=\dfrac{1}{2}(m-2-2\beta_2(M))\geq \dfrac{m-2}{4}\geq \beta_2(M)$.

In both cases, we have $\beta_{0}(M)=1\leq \beta_{2}(M)$ since the symplectic form represents a nontrivial cohomology class.
\end{proof}

\begin{remark}
Corollary~\ref{corollary:increasing} shows the answer is positive for Question~\ref{question:tolman} by Tolman, in the case of $8$ and $10$ dimensional Hamiltonian GKM manifolds whose moment map is in general position.
\end{remark}

\begin{definition}
A graph $\Ga=(V, E)$ is called $k$-trimmed for some positive integer $k$, if 
\begin{itemize}
\item each vertex of $\Ga$ is of degree at least $k+1$,
\item each connected component of $\Ga$ is $(k+1)$-edge-connected.
\end{itemize}
\end{definition}

\begin{lemma}
Every graph $\Ga=(V, E)$ has a unique maximal $k$-trimmed subgraph, which we will denote by $\tilde{\Ga}^{k}$. It can be obtained by the following algorithm:
\begin{enumerate}
\item If $\Ga$ is $k$-trimmed, stop.
\item If $\Ga$ contains a vertex $v_i$ of degree less than or equal to $k$, we define $\Ga_1=(V_1, E_1)$ by $V_1=V\backslash\{v_i\}$ and $E_1=E\backslash E_{v_i}$, where $E_{v_i}$ is the set of edges containing $v_i$.
\item If every vertex of $\Ga$ is of degree $k+1$ or higher, and there exists $E'=\{e_{i_1j_1}, ..., e_{i_tj_t}\}\subseteq E$, such that $t\leq k$ and removing these edges would increase the number of connected components of $\Ga$, but removing any $t-1$ among them would not, then we define $\Ga_1=(V_1, E_1)$ by $V_1=V$, and $E_1=E\backslash E'$.
\item Repeat the above steps to $\Ga_1$.
\end{enumerate}
This process will finally stop. The resulting graph, which could be empty,  is $\tilde{\Ga}^{k}$.

By the Deleting Lemma~\ref{lemma:deleting} and the Disconnecting Lemma~\ref{lemma:disconnect edge}, we  see that $s_{k-1}(\tilde{\Ga}^{k})=s_{k-1}(\Ga)$.
\end{lemma}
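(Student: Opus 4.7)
The plan is to prove four separate claims: existence and uniqueness of a maximum $k$-trimmed subgraph, termination of the algorithm, that the algorithmic output coincides with this maximum, and the equality $s_{k-1}(\tilde{\Ga}^{k})=s_{k-1}(\Ga)$. I would package existence, uniqueness, and the algorithmic description into a single induction along the steps of the algorithm, and then harvest the $s_{k-1}$-identity as a bookkeeping corollary of the Deleting and Disconnecting Lemmas.

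The core lemma I would prove first is the following invariance statement: \emph{every $k$-trimmed subgraph $H\subseteq \Ga$ is preserved by one step of the algorithm.} For a step of type (2), this is immediate: if $v_i$ has degree $\leq k$ in $\Ga$ then $v_i\notin V(H)$, because every vertex of $H$ has degree $\geq k+1$ in $H$ and hence in $\Ga$; thus deleting $v_i$ and its incident edges leaves $H$ intact. For a step of type (3), suppose $E'=\{e_{i_1j_1},\ldots,e_{i_tj_t}\}$ with $t\leq k$ is a minimal edge-set whose removal increases the number of connected components of $\Ga$. Let $C$ be the component of $\Ga$ that is split into parts $A$ and $B$ by removal of $E'$. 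For any connected component $H_\ell$ of $H$ lying in $C$, the edges of $E'\cap E(H_\ell)$ form an edge cut of $H_\ell$; but $|E'\cap E(H_\ell)|\leq t\leq k$, while $H_\ell$ is $(k+1)$-edge-connected, so this cut must be empty. Therefore $H_\ell$ lies entirely in $A$ or entirely in $B$, and no edge of $H$ is destroyed.

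Armed with this invariance, I would argue as follows. The algorithm terminates because each step strictly decreases $|V|+|E|$, and it halts only when neither condition (2) nor (3) applies, which is precisely the condition for the current graph $\tilde{\Ga}^{k}$ to be $k$-trimmed. By the invariance lemma, any $k$-trimmed subgraph $H$ of $\Ga$ survives unchanged throughout the algorithm, hence $H\subseteq \tilde{\Ga}^{k}$. This simultaneously shows that $\tilde{\Ga}^{k}$ is $k$-trimmed, that it contains every other $k$-trimmed subgraph, and therefore that it is the unique maximum, independent of the choices made (which vertex to delete first in step (2), or which edge cut to take in step (3)).

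Finally, to see $s_{k-1}(\tilde{\Ga}^{k})=s_{k-1}(\Ga)$, I verify that each of the two types of moves preserves $s_{k-1}$. A step of type (2) deletes a vertex $v_i$ of degree $\leq k = (k-1)+1$, so the Deleting Lemma~\ref{lemma:deleting} (applied at index $k-1$) gives equality of $s_{k-1}$ before and after. A step of type (3) removes a minimal disconnecting set of $t\leq k$ edges, which is precisely the hypothesis of the Disconnecting Lemma~\ref{lemma:disconnect edge} at index $k-1$ (requiring $k-1\geq t-1$); this yields $s_{k-1}(\Ga_{\text{before}})=s_{k-1}(\Ga_1)+s_{k-1}(\Ga_2)$, and summing $s_{k-1}$ over components of the running graph preserves the total. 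Chaining these equalities along the algorithm gives the desired identity.

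I expect the main obstacle to be the step (3) part of the invariance lemma, specifically the verification that a $(k+1)$-edge-connected component $H_\ell$ of $H$ cannot straddle the cut induced by $E'$. The argument above hinges on $E'$ being a minimal cut in $\Ga$ (or its current reduction), not in $H_\ell$, so I would want to state carefully that the intersection $E'\cap E(H_\ell)$ is itself an edge cut of $H_\ell$ whenever $H_\ell$ has vertices on both sides of the split — that is the only place where the connectedness of $H_\ell$ and the fact that $E'$ separates $A$ from $B$ in the ambient graph are genuinely combined.
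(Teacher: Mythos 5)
Your proposal is correct and follows essentially the same route as the paper: both rest on the invariance of every $k$-trimmed subgraph under each algorithm step (a low-degree vertex cannot lie in such a subgraph, and a minimal disconnecting set of at most $k$ edges cannot meet a $(k+1)$-edge-connected component), followed by chaining the Deleting and Disconnecting Lemmas at index $k-1$. The only cosmetic difference is that the paper gets uniqueness from the observation that the union of two $k$-trimmed subgraphs is $k$-trimmed, whereas you extract it directly from the invariance argument, and you spell out the step-(3) edge-connectivity justification that the paper leaves implicit.
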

\begin{proof}
The union of two $k$-trimmed subgraphs of $\Ga$ is also $k$-trimmed, so there is a unique maximal $k$-trimmed subgraph.

If $\Ga$ is itself $k$-trimmed, the lemma is trivial. Otherwise, following the algorithm, we get a sequence of graphs $\Ga_1, \Ga_2, ..., \Ga_p$, where $\Ga_{i+1}$ is a subgraph of $\Ga_i$ obtained from $\Ga_i$ either as in Case (2) or as in Case (3) stated in the lemma, $\Ga_p$ is $k$-trimmed. This sequence is not necessarily unique, but we will show in any case $\Ga_p=\tilde{\Ga}^k$. 

Assume $\tilde{\Ga}^k=(\tilde{V}^k, \tilde{E}^k)$. First, $\Ga_p$ is a subgraph of $\tilde{\Ga}^k$ since $\tilde{\Ga}^k$ is the unique maximal $k$-trimmed subgraph of $\Ga$. Secondly, if $\Ga$ is as in Case (2), then $v_i\notin \tilde{V}^k$, so $\tilde{\Ga}^k$ is a subgraph of $\Ga_1$. If $\Ga$ is as in Case (3), then $E'\cap \tilde{E}^k=\emptyset$, so $\tilde{\Ga}^k$ is also a subgraph of $\Ga_1$. Then we can show inductively that $\tilde{\Ga}^k$ is a subgraph of $\Ga_p$.   So $\Ga_p=\tilde{\Ga}^k$.
\end{proof}

We make the following definition so we can make the statements and proofs in the rest of the section more concise. 
\begin{definition}
For any $d\geq 2$, we call a graph $\Ga=(V, E)$ is {\it of type $A_{d}$} if 
\begin{itemize}
\item each vertex of $\Ga$ is of degree $d$ or $d-1$;
\item each connected component of $\Ga$ has at least one vertex of degree $d-1$;
\item each connected component of $\Ga$ is $(d-1)$-edge-connected.
\end{itemize}
\end{definition}

\begin{proposition}\label{prop:bound for s_d-3}
Assume $\Ga=(V, E)$ is a graph of type $A_{d}$,  then 
\begin{equation}\label{eq:s_d-3}
s_{d-3}(\Ga)\leq \dfrac{n_{d}(\Ga)}{d-1}+\pi_{0}(\Ga),\end{equation}
where $n_{d}(\Ga)=\{v_i\in V\big| \lambda(v_i)=d\}$ is the number of vertices of degree $d$, $\pi_{0}(\Ga)$ is the number of connected components of $\Ga$.
\end{proposition}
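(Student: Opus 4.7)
The plan is to proceed by induction on $|V(\Ga)|$. Since $s_{d-3}(\cdot)$, $n_d(\cdot)$, and $\pi_0(\cdot)$ are all additive over connected components, we may restrict attention to the connected case, in which it suffices to show $s_{d-3}(\Ga)\le \dfrac{n_d(\Ga)}{d-1}+1$.

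The base case is $n_d(\Ga)=0$: every vertex then has degree $d-1$, so $\Ga$ is a connected $(d-1)$-regular graph. Proposition~\ref{prop:regular graph}, applied with regular degree $d-1$, yields $c_{d-1}(\Ga)\le 1$; Lemma~\ref{lemma:vanishing} gives $s_{d-2}(\Ga)=s_{d-1}(\Ga)=0$; and the formula of Proposition~\ref{prop:c_i and r_i} then implies $s_{d-3}(\Ga)=c_{d-1}(\Ga)+2s_{d-2}(\Ga)\le 1$, which matches the bound.

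For the inductive step, when $n_d(\Ga)\ge 1$, the cleanest reduction is to delete a degree-$(d-1)$ vertex $v_t$ all of whose neighbors have degree $d$. Corollary~\ref{cor:delete} then gives $s_{d-3}(\Ga)\le s_{d-3}(\Ga\setminus\{v_t\})+1$, and the $d-1$ neighbors of $v_t$ each drop from degree $d$ to degree $d-1$, so $n_d$ decreases by exactly $d-1$ and the vertex count drops by one. If $\Ga\setminus\{v_t\}$ remains a connected graph of type $A_d$, the inductive hypothesis gives $s_{d-3}(\Ga\setminus\{v_t\})\le \dfrac{n_d(\Ga)-(d-1)}{d-1}+1=\dfrac{n_d(\Ga)}{d-1}$, and we conclude.

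The hard part will be two-fold. First, such an ``ideal'' $v_t$ need not exist---for instance, a $3$-regular graph augmented by a single extra edge between two previously nonadjacent vertices is of type $A_4$ with $n_4=2$, yet every degree-$3$ vertex has at least one degree-$3$ neighbor. Second, even when the ideal $v_t$ does exist, its removal may break $(d-1)$-edge-connectedness or disconnect its component. To handle both issues we combine the Deleting Lemma (Lemma~\ref{lemma:deleting}) with the Disconnecting Lemma (Lemma~\ref{lemma:disconnect edge}): after deleting a suitable vertex (possibly of degree $d$, which costs at most $2$ in $s_{d-3}$ by Corollary~\ref{cor:delete}), we cascade the Deleting Lemma on any vertex whose degree has dropped to $d-2$ or below (such removals preserve $s_{d-3}$ since $d-2=(d-3)+1$), and we apply the Disconnecting Lemma to split off any piece separated by a cut of at most $d-2$ edges (which gives the additive identity $s_{d-3}(\Ga)=s_{d-3}(\Ga_1)+s_{d-3}(\Ga_2)$ since $d-3\ge(d-2)-1$). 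The surviving graph is again of type $A_d$, and the inductive hypothesis applies. The delicate step is the bookkeeping: one must verify that the total drop in $n_d$ from the initial deletion plus the cascade, balanced against the possible increases in $\pi_0$ coming from splittings, always absorbs the ``$+1$'' or ``$+2$'' cost paid by Corollary~\ref{cor:delete}, so that the claimed inequality survives. This case analysis rests crucially on the $(d-1)$-edge-connectedness of the components of $\Ga$ together with the degree cap $\lambda\le d$ imposed by type $A_d$.
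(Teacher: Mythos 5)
Your setup (induction on $|V|$, reduction to the connected case, deleting a degree-$(d-1)$ vertex at cost $+1$, then cleaning up with the Deleting and Disconnecting Lemmas) is exactly the paper's strategy, and your base case via Proposition~\ref{prop:regular graph} is fine. But the proof has a genuine gap: the step you label ``delicate bookkeeping'' is the entire content of the inductive step, and you never carry it out. Concretely, after deleting a degree-$(d-1)$ vertex $v_t$ (which always exists by the definition of type $A_d$ --- your worry about an ``ideal'' $v_t$ with all neighbors of degree $d$, and about paying $+2$ for a degree-$d$ deletion, is a red herring; the paper never needs either) and passing to the maximal $(d-2)$-trimmed subgraph $\tilde{\Ga'}^{d-2}=\bigsqcup_{i=1}^{p}\Ga_i$, one must show
\[
\sum_{i=1}^{p} n_d(\Ga_i)\ \leq\ n_d(\Ga)-(d-1)p,
\]
i.e.\ that \emph{each} surviving component accounts for at least $d-1$ vertices demoted from degree $d$ to degree $d-1$. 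This is where the $(d-1)$-edge-connectedness of $\Ga$ enters: since $V_i\subsetneq V$ (it misses $v_t$), at least $d-1$ edges of $\Ga$ leave $V_i$; each such edge lowers the degree of some vertex of $V_i$, and because every vertex of the trimmed component still has degree at least $d-1$, no vertex of $V_i$ can absorb more than one lost edge, so these $d-1$ edges hit $d-1$ \emph{distinct} vertices, each necessarily of degree $d$ in $\Ga$ and $d-1$ in $\Ga_i$. (This also shows each $\Ga_i$ has a degree-$(d-1)$ vertex, i.e.\ is of type $A_d$, which you asserted without proof.) With this count the arithmetic closes exactly: $s_{d-3}(\Ga)\leq s_{d-3}(\tilde{\Ga'}^{d-2})+1\leq \frac{1}{d-1}\sum_i n_d(\Ga_i)+p+1\leq \frac{n_d(\Ga)-(d-1)p}{d-1}+p+1=\frac{n_d(\Ga)}{d-1}+1$, so the loss of $p$ components is paid for precisely by the demoted vertices. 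Without this argument the possible increases in $\pi_0$ are unbounded relative to the drop in $n_d$, and the induction does not close; so as written the proposal identifies the right framework but omits the proof's essential idea.
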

\begin{proof}
We are going to use induction on the size of $|V|$. The graph of type $A_d$ with least number of vertices is the complete graph on $d$ vertices. In this case it follows from Proposition~\ref{prop:complete} that $s_{d-3}(\Ga)=1$. And we obviously have $\dfrac{n_{d}(\Ga)}{d-1}+\pi_{0}(\Ga)=1$. So \eqref{eq:s_d-3} holds.

Now we consider $\Ga=(V, E)$ with $|V|=m> d$ and assume \eqref{eq:s_d-3} holds for $|V|<m$. If $\pi_{0}(\Ga)>1$, then each connected component of $\Ga$ is still of type $A_d$, and is of less vertices. So the induction hypothesis implies \eqref{eq:s_d-3} holds for each connected component. We may add them up to show \eqref{eq:s_d-3} holds for $\Ga$ as well.  Now we assume $\Ga$ is connected. 

Since $\Ga$ is of type $A_d$, we may pick a vertex $v_t$ of degree $d-1$. Define $\Ga'=(V', E')$ by $V'=V\backslash\{v_t\}$ and $E'=E\backslash E_{v_t}$, where $E_{v_t}$ is the set of edges containing $v_t$. By the corollary of the Deleting Lemma, we have $s_{d-3}(\Ga)\leq s_{d-3}(\Ga')+1$. Let $\tilde{\Ga'}^{d-2}$ be the maximal $(d-2)$-trimmed subgraph of $\Ga'$, then we have $s_{d-3}(\tilde{\Ga'}^{d-2})=s_{d-3}(\Ga')$. Assume $\pi_{0}(\tilde{\Ga'}^{d-2})=p$,  $\tilde{\Ga'}^{d-2}=\ds{\bigsqcup_{i=1}^{p}\Ga_{i}}$, and $\Ga_i=(V_i, E_i)$.

Since $\Ga$ is $(d-1)$-edge-connected, for each $\Ga_i$ there must be at  least $d-1$ vertices in $V_i$ whose degree in $\Ga$ were $d$ but now has degree $d-1$ in $\Ga_i$. So \[\ds{\sum_{i=1}^{p}}n_{d}(\Ga_i)\leq n_{d}(\Ga)-(d-1)p.\]

Since $\tilde{\Ga'}^{d-2}$ is also of type $A_d$ and with less vertices than $\Ga$, by the induction hypothesis we have
\[s_{d-3}(\tilde{\Ga'}^{d-2})\leq \dfrac{n_{d}(\tilde{\Ga'}^{d-2})}{d-1}+\pi_{0}(\tilde{\Ga'}^{d-2}).\]
So 
\[
\begin{array}{cl}
s_{d-3}(\Ga)&\leq s_{d-3}(\tilde{\Ga'}^{d-2})+1\\[0.5ex]
&\leq \dfrac{n_{d}(\tilde{\Ga'}^{d-2})}{d-1}+\pi_{0}(\tilde{\Ga'}^{d-2})+1\\[1.2ex]
&=\dfrac{1}{d-1}\ds{\sum_{i=1}^p n_{d}(\Ga_i)} + p+1\\
&\leq \dfrac{1}{d-1}(n_{d}(\Ga)-(d-1)p)+p+1\\[1.2ex]
&= \dfrac{n_{d}(\Ga)}{d-1}+1\\[0.5ex]
&=\dfrac{n_{d}(\Ga)}{d-1}+\pi_{0}(\Ga).
\end{array}
\]
This completes the induction step.
\end{proof}

Now Theorem~\ref{thm:bound} follows easily.
\begin{proof}[Proof of Theorem~\ref{thm:bound}] 
Since $c_{d}(\Ga)=1$, by Theorem~\ref{theorem:edge connected} we know that $\Ga$ is $d$-edge-connected.
Pick any edge $e_{ij}\in E$ and from a new graph $\Ga'=(V, E\backslash\{e_{ij}\})$.  Then $\Ga'$ is of type $A_d$. By Proposition~\ref{prop:bound for s_d-3}, we have 
\[s_{d-3}(\Ga')\leq \dfrac{m-2}{d-1}+1.\]
So $s_{d-3}(\Ga)\leq s_{d-3}(\Ga')+1\leq \dfrac{m-2}{d-1}+2.$ Hence 
\[c_{d-1}(\Ga)=s_{d-1}(\Ga)+s_{d-3}(\Ga)-2s_{d-2}(\Ga)\leq 0+\dfrac{m-2}{d-1}+2-2=\dfrac{m-2}{d-1},\]
where we have used the facts $s_{d-1}(\Ga)=0$ by the Deleting Lemma, and $s_{d-2}(\Ga)=c_{d}(\Ga)-(s_{d}(\Ga)-2s_{d-1}(\Ga))=1$.
\end{proof}

\end{document}